%
\documentclass[12pt, reqno]{amsart}
\usepackage{amsmath, amsthm, amscd, amsfonts, amssymb, graphicx, color, mathrsfs}
\usepackage[bookmarksnumbered, colorlinks, plainpages]{hyperref}

\usepackage{slashed}
\usepackage[capitalize]{cleveref}


\setlength{\textwidth}{15.2cm}
\setlength{\textheight}{22.7cm}
\setlength{\topmargin}{0mm}
\setlength{\oddsidemargin}{3mm}
\setlength{\evensidemargin}{3mm}
\setlength{\footskip}{1cm}


\newtheorem{theorem}{Theorem}[section]
\newtheorem{lemma}[theorem]{Lemma}

\newtheorem{corollary}[theorem]{Corollary}

\theoremstyle{definition}
\newtheorem{definition}[theorem]{Definition}

\theoremstyle{remark}
\newtheorem{remark}[theorem]{Remark}
\numberwithin{equation}{section}

\newcommand*\diff{\mathop{}\!\mathrm{d}}

\DeclareMathOperator*{\esssup}{ess\,sup}

\crefname{equation}{}{}

\begin{document}
\setcounter{page}{1}

\title[Boundedness of operators on the torus. II]{Boundedness of pseudo-differential operators on the torus revisited. II}

\author[D. Cardona]{Duv\'an Cardona}
\address{
  Duv\'an Cardona:
  \endgraf
  Department of Mathematics: Analysis, Logic and Discrete Mathematics
  \endgraf
  Ghent University, Belgium
  \endgraf
  {\it E-mail address} {\rm duvanc306@gmail.com, duvan.cardonasanchez@ugent.be}
  }

\author[M. A. Mart\'inez]{Manuel Alejandro Mart\'inez}
\address{
  Manuel Alejandro Mart\'inez
  \endgraf
  Department of Mathematics
  \endgraf
 Universidad del Valle de Guatemala, Guatemala
  \endgraf
  {\it E-mail address} {\rm mar21403@uvg.edu.gt, manuelalejandromartinezf@gmail.com}
  }

\thanks{ Manuel Alejandro Mart\'inez has been supported by the {\it{Liderazgo en Ciencias}} scholarship of the Universidad del Valle de Guatemala.  Duv\'an Cardona is supported  by the FWO  Odysseus  1  grant  G.0H94.18N:  Analysis  and  Partial Differential Equations, by the Methusalem programme of the Ghent University Special Research Fund (BOF)
(Grant number 01M01021) and has been supported by the FWO Fellowship
Grant No 1204824N of the Belgian Research Foundation FWO}

     \keywords{Discrete Fourier analysis - Oscillating singular integrals - Periodic pseudo-differential operators - Torus}
    \subjclass[2010]{Primary 22E30; Secondary 58J40.}

\begin{abstract}
In this paper we continue our program of revisiting the new aspects about the boundedness properties of pseudo-differential operators on the torus. Here we prove $H^p$-$L^p$ and $H^p$-estimates for H\"ormander classes of pseudo-differential operators on the torus $\mathbb{T}^n$ for $p\leq 1$. The results are presented in the context of the global symbolic analysis developed by Ruzhansky and Turunen on $\mathbb{T}^n \times \mathbb{Z}^n$ by using the discrete Fourier analysis, which extends the $(\rho, \delta)$-H\"ormander classes on $\mathbb{T}^n$ defined by local coordinate systems. These results extend those proved by \'Alvarez and Hounie for the Euclidean case, considering even the case $\rho\leq\delta$.

\end{abstract} 

\maketitle
\tableofcontents
\allowdisplaybreaks

\section{Introduction}

The study of the boundedness of linear operators between Banach spaces is not only a classical problem in analysis, but useful in a variety of fields. For example, studying estimates of the norms of operators on $L^p$ spaces, Hardy spaces, Sobolev spaces, and other spaces of functions, in many situations, has been a crucial step when proving existence, analyzing the regularity and determining approximations of solutions of systems of linear and non-linear PDE's, see e.g. \cite{ruzhansky-turunen2}. In particular, Fourier and spectral multipliers, and, more generally, pseudo-differential operators provide a rich and accessible family of fundamental solutions for robust families of partial differential operators. Among them, one can mention the family of elliptic partial differential operators.

By continuing with our program of revisiting the new aspects about the boundedness properties of pseudo-differential operators on the torus, which has been started with the manuscript \cite{Cardona:Martinez} focused on Lebesgue spaces $L^p,$ $1<p<\infty,$ this work focused on the same problem for Hardy spaces $H^p,$ $0<p\leq 1,$  on the torus $\mathbb{T}^n:=\mathbb{R}^n /\mathbb{Z}^n$. 

On $\mathbb{R}^n$, the theory of Hardy spaces on several variables is thoroughly discussed by Fefferman and Stein in \cite{fefferman-stein}.  They proved that it is possible to apply the complex interpolation method between  $H^1$ and $L^2$, and $L^2$ and  $\mathrm{BMO}$, in order to obtain continuity properties concerning the Lebesgue spaces $L^p$. Moreover, Fefferman significantly discovered that the dual of the Hardy space $H^1$ is the space of functions with bounded mean oscillation $\mathrm{BMO}$, see \cite{fefferman-BMO}. These facts allowed Fefferman to prove the $L^p$-boundedness, $1<p<\infty,$ of pseudo-differential operators with symbols in the H\"ormander class $S^m_{\rho, \delta} (\mathbb{R}^n \times \mathbb{R}^n), $ where $0\leq\delta<\rho\leq1$ and $m\leq-n(1-\rho)|1/p-1/2|$. 
This result was extended to the torus by Delgado in \cite{delgado}, then to compact Lie groups by Delgado a Ruzhansky \cite{delgado-ruzhansky} and to manifolds with bounded geometry by G\'omez Cobos and Ruzhansky \cite{cobos-ruzhansky}. It has also been extended for subelliptic H\"ormander classes on compact Lie groups in \cite{cardona-ruzhansky-subelliptic}. For more details on the definition of pseudo-differential operators on the torus that allow for this extensions, we refer the reader to the foundational work of Agranovich \cite{agranovich} and McLane \cite{mclane}, and for more details on the calculus and quantization of toroidal pseudo-differential operators see Ruzhansky, Turunen and Vainikko \cite{ruzhansky-turunen, ruzhansky-turunen2, turunen-vainikko}. This subject will be revisited in \cref{section:prelims}.

On the other hand, \'Alvarez and Hounie \cite{alvarez-hounie} extended Fefferman's result on $\mathbb{R}^n$ to the case $\delta \geq \rho$ and, as a result, adapting the restriction of the order to $m\leq-n[(1-\rho)|1/p-1/2|+\lambda]$ where $\lambda=\max\{0, (\delta-\rho)/2 \}$. In \cite{Cardona:Martinez} we extend \'Alvarez and Hounie's $L^p$ continuity result to the case of the torus, considering the case $\delta\geq\rho$ as well. 
Continuing this program, in this paper we consider another result due to \'Alvarez and Hounie, now concerning the $H^p$-$L^p$ and the $H^p$ continuity properties of pseudo-differential operators defined on $\mathbb{R}^n$. We extend this result to the torus taking a similar approach as the one used in \cite{alvarez-hounie}. First, we consider the general case of operators with operator valued kernels and we prove the toroidal analogous of $H^p$-$L^p$ and $H^p$-boundedness results proved by \'Alvarez and Milman, see \cite{alvarez-milman}. In order to employ these results for the case of toroidal pseudo-differential operators, first we need to prove that their Schwartz kernels satisfy certain estimates when considered under an annulus "dyadic" decomposition. These estimates are analogous to the $D_{r,\alpha}$ conditions stated in \cite{alvarez-milman}, which are used to prove, for example, the $L^1$-weak-$L^1$ continuity of operators with operator valued kernel on $\mathbb{R}^n$. The toroidal counterpart of this theorem was proved in \cite{Cardona:Martinez}. These kernel estimates suffice to obtain the desired $H^p$-$L^p$-continuity of toroidal pseudo-differential operators. 
However, in order to prove the $H^p$-boundedness we define the concept of molecule, an object that behaves similar to an $H^p$-atom. Then, we prove that the image of a $(p,2)$-atom under a certain operator with operator valued kernel is one of such molecules, and their $H^p$-norms are uniform. Moreover, it is also required that the operator $T$ satisfies the $T^*(\overline{e})=0$ condition, where $\overline{e}$ is a constant function taking values in the dual space of a Banach space. This condition is similar to the $T(1)$ condition that David and Journ\'e showed to be a necessary and sufficient condition for the $L^2$-boundedness of Calder\'on-Zygmund integral operators \cite{david-journe}. This concludes in the $H^p$, $0<p\leq1$, boundedness of operators with operator valued kernel on the torus. As before, the kernel estimates of toroidal pseudo-differential operators allow us to employ this theorem for the case of interest of this paper. Now,
we proceed to state the main result of this paper, which is the toroidal counterpart of \'Alvarez and Hounie's result. Continuing forward, we will denote $\lambda = \max\{ 0, (\delta-\rho)/2 \}.$

\begin{theorem}
    Let $T\in \Psi^m_{\rho,\delta}(\mathbb{T}^n\times\mathbb{Z}^n)$, $0<\rho\leq1$, $0\leq\delta<1$. Assume that 
    \begin{equation*}
        m\leq-\beta-n\lambda \quad  \text{for some} \quad (1-\rho)\frac{n}{2}\leq\beta< \frac{n}{2}.
    \end{equation*}
    Then, the operator $T$ is a continuous mapping from $H^p(\mathbb{T}^n)$ into $L^p(\mathbb{T}^n)$ for $p$ such that,  $p_0\leq p\leq 1,$  when $\rho<1$, where 
    \begin{equation}
        \frac{1}{p_0} = \frac{1}{2} + \frac{\beta(1/\rho + n/2)}{n(1/\rho-1+\beta)},
        \label{eq:p0}
    \end{equation}
    and for $1\geq p > p_0=n/(n+1)$ when $\rho=1$. Moreover, if also
    $T^*(1)=0$ in the sense of BMO, then the operator $T$ is a continuous mapping from $H^p(\mathbb{T}^n)$ into itself for $p_0<p\leq1$, with $p_0$ as in \cref{eq:p0}.
\end{theorem}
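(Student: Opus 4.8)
\emph{Proof strategy.} The plan is to deduce the theorem from the two abstract continuity results for operators with operator-valued kernels proved earlier in the paper --- the toroidal analogues of the $H^p$--$L^p$ and $H^p$--$H^p$ theorems of \'Alvarez and Milman \cite{alvarez-milman} --- by checking that a toroidal pseudo-differential operator $T\in\Psi^m_{\rho,\delta}(\mathbb{T}^n\times\mathbb{Z}^n)$ with $m\leq-\beta-n\lambda$ meets their hypotheses. Two ingredients are needed: the $L^2(\mathbb{T}^n)$-boundedness of $T$, and the toroidal $D_{r,\alpha}$-type conditions for its Schwartz kernel. The first is immediate, since $\beta\geq0$ forces $m\leq-n\lambda$ and $\Psi^{-n\lambda}_{\rho,\delta}(\mathbb{T}^n\times\mathbb{Z}^n)\subset\mathcal{L}(L^2(\mathbb{T}^n))$ for $0<\rho\leq1$, $0\leq\delta<1$ by the toroidal Calder\'on--Vaillancourt estimate (which is exactly the $p=2$ case of the main result of \cite{Cardona:Martinez}): there is no loss when $\delta\leq\rho$, and the order $-n(\delta-\rho)/2=-n\lambda$ compensates the loss when $\delta>\rho$.

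For the kernel conditions I would decompose the symbol dyadically in the covariable, $\sigma(x,\xi)=\sum_{\nu\geq0}\sigma_\nu(x,\xi)$ with $\sigma_\nu$ supported in $\langle\xi\rangle\sim2^\nu$, obtaining $T=\sum_{\nu\geq0}T_\nu$ with kernels $K_\nu(x,y)=\sum_{\xi\in\mathbb{Z}^n}e^{2\pi i(x-y)\cdot\xi}\sigma_\nu(x,\xi)$. Summing the discrete differences $\Delta_\xi^\gamma\sigma_\nu$ against the exponentials (summation by parts), together with $\#\{\xi:\langle\xi\rangle\sim2^\nu\}\sim2^{\nu n}$, the $S^m_{\rho,\delta}$ symbol inequalities, and $|e^{2\pi iz_j}-1|\sim|z_j|$ on a fundamental domain, yields for every $L$ the bounds
\begin{equation*}
|K_\nu(x,y)|\lesssim 2^{\nu(n+m)}\bigl(1+2^{\nu\rho}|x-y|\bigr)^{-L},\qquad |\partial_y^\beta K_\nu(x,y)|\lesssim 2^{\nu(n+m+|\beta|)}\bigl(1+2^{\nu\rho}|x-y|\bigr)^{-L},
\end{equation*}
with $|x-y|$ the distance on $\mathbb{T}^n$; in the regime $\delta\geq\rho$ one additionally needs the order $-n\lambda$ and a secondary localisation into cubes of side $\sim2^{-\nu\rho}$, exactly as in \cite{alvarez-hounie}. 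These are the estimates established in the earlier sections which, integrated over the annuli $\{|x-y|\sim2^{-j}\}$ and summed in $\nu$, give the $D_{r,\alpha}$-type hypotheses of the abstract theorems.

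It then remains to identify the admissible range of $p$. The mechanism encapsulated by the abstract $H^p$--$L^p$ theorem is the standard atomic one: for a $(p,2)$-atom $a$ on a ball $B=B(x_0,r)$, over a fixed dilate $B^{\ast}$ one uses H\"older and $L^2$-boundedness, $\int_{B^{\ast}}|Ta|^p\lesssim\|a\|_{L^2}^p|B^{\ast}|^{1-p/2}\lesssim1$; away from $B^{\ast}$ one invokes $\int a=0$ to write $Ta(x)=\sum_\nu\int[K_\nu(x,y)-K_\nu(x,x_0)]a(y)\,dy$, splits the $\nu$-sum at $2^{\nu\rho}\sim r^{-1}$, gains a factor $r\,2^{\nu(n+m+1)}$ from $|\nabla_yK_\nu|$ on the low frequencies, uses the size of $K_\nu$ on the high ones, and in both cases lets the decay $(1+2^{\nu\rho}|x-y|)^{-L}$ localise the $x$-integral. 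Tracking the powers of $r$ through the two resulting geometric series, inserting $m\leq-\beta-n\lambda$, and optimising the Lebesgue exponent used against the $x$-variable (equivalently, the fractional amount of smoothing of $T$ spent on the cancellation, which is what $\beta$ records), one finds that the estimate closes precisely when $p\geq p_0$, with $1/p_0$ as in \cref{eq:p0}; when $\rho=1$ the $\beta$-dependence cancels and the condition becomes $p>n/(n+1)$. The exceptional constant atom is harmless because $T1=\sigma(\cdot,0)\in C^\infty(\mathbb{T}^n)\subset H^p(\mathbb{T}^n)$.

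For the $H^p$--$H^p$ statement one applies the second abstract theorem, whose extra hypothesis $T^{\ast}(\overline e)=0$ specialises, in the scalar case, to $T^{\ast}(1)=0$ in $\mathrm{BMO}(\mathbb{T}^n)$; this forces $\int Ta=0$ for every mean-zero atom $a$ and, combined with the kernel bounds above, shows that $Ta$ is an $H^p$-molecule in the sense introduced earlier, with molecular norm bounded uniformly in $a$, so that $T\colon H^p(\mathbb{T}^n)\to H^p(\mathbb{T}^n)$ for $p_0<p\leq1$ (the constant atom again causing no difficulty since $\sigma(\cdot,0)$ is smooth, hence in $H^p$). The genuine obstacle in this scheme is the verification of the $D_{r,\alpha}$-type kernel conditions in the discrete setting: replacing integration over $\mathbb{R}^n$ by summation over $\mathbb{Z}^n$, handling the difference operators through summation by parts, and, above all, controlling the case $\delta\geq\rho$, in which the single kernel $K(x,y)$ is not of Calder\'on--Zygmund type and the dyadic pieces must be reassembled carefully; once those estimates are in place, the residual difficulty is the delicate but essentially routine optimisation that pins the admissible range down to exactly \cref{eq:p0}.
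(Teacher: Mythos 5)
Your overall architecture is the paper's: reduce to the abstract operator-valued-kernel theorems and verify, for $T\in\Psi^m_{\rho,\delta}(\mathbb{T}^n\times\mathbb{Z}^n)$, the $L^2$-boundedness (via the toroidal Calder\'on--Vaillancourt estimate from \cite{Cardona:Martinez}, which is exactly what the paper invokes) and the annular kernel-difference estimates (the paper uses a continuous Littlewood--Paley cut-off $\varphi(\langle\xi\rangle/t)$ integrated in $t$ rather than your discrete dyadic pieces $\sigma_\nu$; this difference is cosmetic). However, there is a genuine gap in the step that actually produces the exponent $p_0$ of \cref{eq:p0}. You estimate the local part of $Ta$ over \emph{a fixed dilate} $B^{\ast}$ of the supporting ball using only the $L^2$-boundedness, and then hope that ``optimising the Lebesgue exponent used against the $x$-variable'' in the tail closes the estimate down to $p_0$. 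For $\rho<1$ this cannot work as described: the kernel-difference estimate over the ordinary annuli $A_j(z,\sigma)$ (part (b) of \cref{theo:pdo-kernel} with $\gamma=1$) carries the factor $\sigma^{1-1/\rho}$, which blows up as $\sigma\to0$, so the tail does not close for small atoms no matter how you distribute H\"older exponents.

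The two missing ingredients, which are precisely where $\beta$ and the formula for $p_0$ come from, are: (i) the $L^q(\mathbb{T}^n)\to L^2(\mathbb{T}^n)$ boundedness of $T$ with $1/q=1/2+\beta/n$, obtained in the paper by factoring $T=J^{-\beta}(J^{\beta}T)$, using that $J^{\beta}T$ has order $\leq-n\lambda$ (hence is $L^2$-bounded) and that the Bessel potential $J^{-\beta}$ maps $L^q$ to $L^2$ by Hardy--Littlewood--Sobolev (\cref{eq:q-2-boundedness}); and (ii) for atoms on small balls $B(z,\sigma)$, $\sigma<1$, the enlargement of the exceptional set to $B(z,2\sigma^{\gamma})$ with a free parameter $0<\gamma\leq\rho$, so that the tail is integrated over $A_j(z,\sigma^{\gamma})$ where the kernel estimate gives $2^{-j/\rho}\sigma^{1-\gamma/\rho}$ with a \emph{nonnegative} power of $\sigma$. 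The enlarged local ball costs a negative power of $\sigma$ that only the $L^q\to L^2$ gain (not plain $L^2$-boundedness) can absorb; balancing the lower bound on $\gamma$ forced by the local term against the upper bound forced by the tail (\cref{eq:gamma-lowe-bound} and \cref{eq:gamma-upper-bound} in the paper) is what pins down $p_0$. Your sketch gestures at ``the fractional amount of smoothing recorded by $\beta$'' but never introduces either the $L^q\to L^2$ mapping or the $\sigma^{\gamma}$-scaling, so the claimed range $p\geq p_0$ is not actually reached by the argument as written. The $H^p\to H^p$ part is structurally fine (molecules plus $T^{\ast}(\overline e)=0$), but it inherits the same two missing ingredients through the molecular estimates $(\mathbf{M_1'})$ and $(\mathbf{M_2'})$ for small balls.
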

Now, we proceed to discuss our main result.
\begin{remark}
    Notice that for the case $p=1$, this theorem recovers the $H^1(\mathbb{T}^n)$-$L^1(\mathbb{T}^n)$-boundedness proved in \cite{Cardona:Martinez}, which requires that the order of the toroidal pseudo-differential operator $T$ satisfies $m\leq -n[(1-\rho)/2 + \lambda]$. On the other hand, notice that the Hardy spaces $H^p$ are not stable under multiplication of test functions, thus this result cannot be obtained by extending \'Alvarez and Hounie's result using local partitions of unity. Moreover, $(\rho, \delta)$-H\"ormander symbol classes are not stable under change of variables whenever $\rho < 1-\delta$. Hence, we cannot use a partition of unity in the local coordinates of the torus $\mathbb{T}^n$, understood as a closed manifold, to obtain the desired results either. 
\end{remark}
This work is organized as follows. In \cref{section:prelims} we discuss the calculus of pseudo-differential operators, as developed by Ruzhansky and Turunen in \cite{ruzhansky-turunen}. Moreover, we define useful function spaces between which we will prove continuity, and we state conditions that will imply those properties. In \cref{section:Hp-Lp} we prove estimates for the Schwartz kernel of toroidal pseudo-differential operators. Also, we prove $H^p$-$L^p$-boundedness of operators with operator valued kernel, and we combine these properties with the kernel estimates of toroidal pseudo-differential operators to obtain the corresponding $H^p$-$L^p$ continuity of these operators for $p\leq1$. In \cref{section:Hp} we prove $H^p$ continuity for operators with operator valued kernel using molecules as auxiliary objects. Then we employ this result to prove the $H^p$-boundedness of toroidal pseudo-differential operators for $p\leq1$.

\section{Preliminaries}
\label{section:prelims}
In this section we present the preliminaries about the Fourier analysis on the torus and on the toroidal pseudo-differential calculus as developed in \cite{ruzhansky-turunen}.

\begin{definition}[Periodic functions] 
    For a Banach space $X$, a function $f: \mathbb{R}^n \rightarrow X$ is 1-periodic if $f(x) = f(x + k)$ for every $ x \in \mathbb{R}^n $ and $ k \in \mathbb{Z}^n $. We can identify this function with one defined in the torus $\mathbb{T}^n := \mathbb{R}^n / \mathbb{Z}^n $, which is a compact manifold without boundary. Moreover, the space of 1-periodic functions $m$ times continuously differentiable is denoted by $C^m(\mathbb{T}^n;X)$. The test functions are elements of the space of smooth functions $C^\infty(\mathbb{T}^n;X) := \bigcap_m C^m(\mathbb{T}^n;X)$. When $X=\mathbb{C}$, we  simply will use the notation $C^\infty(\mathbb{T}^n)$.
\end{definition}
In order to define the class of pseudo-differential operators on $\mathbb{T}^n$, we first need to define the Fourier transform for 1-periodic smooth functions.
\begin{definition}[Fourier Transform on $\mathbb{T}^n$]
    For $f \in C^\infty(\mathbb{T}^n)$ we define the  \textit{toroidal Fourier transform} $\mathcal{F}_{\mathbb{T}^n}$ as 
    \begin{equation*}
        (\mathcal{F}_{\mathbb{T}^n}f)(\xi) := \int_{\mathbb{T}^n} e^{-i2\pi x \cdot \xi}f(x)\diff x, 
    \end{equation*}
    for $\xi \in \mathbb{Z}^n$.
\end{definition}
In view of the representation theory of the torus $\mathbb{T}^n$, the corresponding frequency domain of 1-periodic smooth functions is the lattice $\mathbb{Z}^n$. Also, it can be proved that the corresponding functions $\mathcal{F}_{\mathbb{T}^n}f$ satisfy certain regularity conditions. In fact, they belong to the Schwartz space defined below. 
\begin{definition}[Schwartz space $\mathcal{S}(\mathbb{Z}^n) $]
    We say $\varphi \in \mathcal{S}(\mathbb{Z}^n)$, that is a \textit{rapidly decaying} function $\mathbb{Z}^n \rightarrow \mathbb{C}$, if for given $0<M<\infty$, there exists $C_{\varphi M} > 0$ such that 

    \begin{equation*}
        |\varphi(\xi)| \leq C_{\varphi M} \langle\xi\rangle^{-M} \quad , \quad \forall\, \xi \in \mathbb{Z}^n.
    \end{equation*}
\end{definition}
It can be proved that the toroidal Fourier transform is a bijection between $C^\infty(\mathbb{T}^n)$ and $\mathcal{S}(\mathbb{Z}^n)$ with inverse  $\mathcal{F}_{\mathbb{T}^n}^{-1}:  \mathcal{S}(\mathbb{Z}^n) \rightarrow C^\infty(\mathbb{T}^n)$ defined by
    \begin{equation*}
        (\mathcal{F}_{\mathbb{T}^n}^{-1}\varphi)(x) := \sum_{\xi \in \mathbb{Z}^n} e^{i2\pi x \cdot \xi} \varphi(\xi).
    \end{equation*}
In the Euclidean case, the Hörmander symbol classes are defined requiring some regularity conditions using partial derivatives in both, the space and frequency domain. However, in the case of the torus the frequency domain is the lattice $\mathbb{Z}^n$. Therefore, we define the natural analogous of the derivative in the discrete case: the difference operator.
\begin{definition}[Partial difference operator]
    Let $e_j \in \mathbb{Z}^n$ such that $(e_j)_i$ is equal to 1 if $i=j$ and 0 otherwise. Then, for $p(\xi):\mathbb{Z}^n \rightarrow \mathbb{C} $ we define 
    \begin{equation*}
        \Delta_{\xi_j}p(\xi) := p(\xi + e_j) - p(\xi) \quad \text{and} \quad \Delta^\alpha_\xi := \Delta_{\xi_1}^{\alpha_1} \cdots \Delta_{\xi_n}^{\alpha_n},
    \end{equation*}
    for any multi-index $\alpha \in \mathbb{N}^n_0$
\end{definition}
Now we proceed to define the Hörmander classes of symbols on $ \mathbb{T}^n \times \mathbb{Z}^n $, as in Ruzhansky and Turunen \cite{ruzhansky-turunen}, that will be important in the quantization of pseudo-differential operators.
\begin{definition}[Hörmander classes on $ \mathbb{T}^n \times \mathbb{Z}^n $]
    Let $m \in \mathbb{R} $ and $0 \leq \delta, \rho\leq 1$. We say that a function $p:=p(x, \xi)$ that is smooth on $x$ for any $\xi \in \mathbb{Z}^n$ belongs to the \textit{toroidal symbol class} $S^m_{\rho,\delta} (\mathbb{T}^n\times \mathbb{Z}^n) $ if 

    \begin{equation*}
        \left|\partial^\beta_x \Delta^\alpha_\xi p(x, \xi)\right| \leq C_{\alpha\beta}\langle\xi\rangle^{m-\rho|\alpha| + \delta|\beta|},
    \end{equation*}
    for every $x \in \mathbb{T}^n$, $\alpha, \beta \in \mathbb{N}^n_0$ and $\xi \in \mathbb{R}^n$. Moreover, we say that $p(x, \xi)$ has order $m$ and we define $S^{-\infty}_{\rho,\delta} (\mathbb{T}^n\times \mathbb{Z}^n) = \bigcap_{m\in\mathbb{R}}S^m_{\rho,\delta} (\mathbb{T}^n\times \mathbb{Z}^n) $.
\end{definition}

\begin{definition}[Pseudo-differential operators on $ \mathbb{T}^n \times \mathbb{Z}^n $]
    For a symbol $p:=p(x, \xi) \in S^m_{\rho,\delta} (\mathbb{T}^n\times \mathbb{Z}^n) $ we can associate a \textit{toroidal pseudo-differential operator} $\mathrm{Op}(p)$ from $C^\infty(\mathbb{T}^n)$ into itself, defined as  
    \begin{equation*}
        \mathrm{Op}(p)f(x):=\sum_{\xi \in \mathbb{Z}^n} e^{i2\pi x \cdot \xi}p(x, \xi)(\mathcal{F}_{\mathbb{T}^n}f)(\xi),
    \end{equation*}
    which can be rewritten as 
    \begin{equation}
        \mathrm{Op}(p)f(x)=  \sum_{\xi \in \mathbb{Z}^n} \int_{\mathbb{T}^n} e^{i2\pi (x-y) \cdot \xi} p(x, \xi)f(y) \diff y .
        \label{eq:pdo-def}
    \end{equation}
    The class of operators with symbols on $S^m_{\rho,\delta} (\mathbb{T}^n\times \mathbb{Z}^n) $ will be denoted  by $\Psi^m_{\rho,\delta} (\mathbb{T}^n\times \mathbb{Z}^n) $.
\end{definition}
    The toroidal Fourier transform and toroidal pseudo-differential operators can be extended by duality to the space of \textit{periodic distributions} $\mathcal{D}'(\mathbb{T}^n)$ consisting of continuous linear functionals on $C^\infty(\mathbb{T}^n)$. This extension allows us to define the Schwartz kernel of a toroidal pseudo-differential operator.
\begin{remark}[Schwartz kernel]
    Observe that the operator in \cref{eq:pdo-def} can be rewritten as
    \begin{equation*}
         \mathrm{Op}(p)f(x)=\int_{\mathbb{T}^n}\left[\sum_{\xi \in \mathbb{Z}^n} e^{i2\pi(x - y) \cdot \xi} p(x, \xi)\right]f(y)\diff y = \int_{\mathbb{T}^n}k(x, y)f(y)\diff y,
    \end{equation*}
    and we say $k(x, y)$ is the \textit{Schwartz kernel} of the corresponding operator. This is a well-defined distribution on $\mathbb{T}^n \times \mathbb{T}^n$. 
\end{remark}
Now we introduce a particular and very useful toroidal pseudo-differential operator.
\begin{definition}[Bessel's potential]
    We define the \textit{Bessel potential} $J^s$ as the pseudo-differential operator with symbol $\langle\xi\rangle^s$ for $s \in \mathbb{R}$.
\end{definition}

For our pourposes, we will use a smooth interpolation of a symbol on $\mathbb{T}^n \times \mathbb{Z}^n $ to obtain a symbol defined on $\mathbb{T}^n \times \mathbb{R}^n $. This is possible in view of the results stated in \cite[section~4.5]{ruzhansky-turunen}, here we write some useful consequences.

\begin{theorem}
    Let $m \in \mathbb{R}$, $0\leq\delta<1$, $0< \rho \leq 1$. The symbol $p \in S^m_{\rho,\delta} (\mathbb{T}^n\times \mathbb{Z}^n) $ is a toroidal symbol if and only if there exists a symbol $\tilde{p} \in S^m_{\rho,\delta} (\mathbb{T}^n\times \mathbb{R}^n) $ such that $p = \tilde{p}|_{\mathbb{T}^n\times \mathbb{Z}^n} $. Moreover, this extension is unique modulo $S^{-\infty}(\mathbb{T}^n\times \mathbb{R}^n)$.
    \label{theo:equivalence-symbols}
\end{theorem}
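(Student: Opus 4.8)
The plan is to prove the equivalence by treating its two implications separately and then to settle the uniqueness assertion; the construction of the Euclidean extension is the substantive point. The implication ``restriction of a Euclidean symbol $\Rightarrow$ toroidal symbol'' is immediate: if $p=\tilde p|_{\mathbb{T}^n\times\mathbb{Z}^n}$ with $\tilde p\in S^m_{\rho,\delta}(\mathbb{T}^n\times\mathbb{R}^n)$, then writing the forward difference as $\Delta_{\xi_j}g(\xi)=\int_0^1\partial_{\xi_j}g(\xi+te_j)\,dt$ and iterating gives $|\partial_x^\beta\Delta_\xi^\alpha\tilde p(x,\xi)|\le\sup_{|\eta|\le|\alpha|}|\partial_x^\beta\partial_\xi^\alpha\tilde p(x,\xi+\eta)|$ with $\eta$ ranging over a bounded set; since $\langle\xi+\eta\rangle\asymp\langle\xi\rangle$ uniformly for such $\eta$, this is $\lesssim\langle\xi\rangle^{m-\rho|\alpha|+\delta|\beta|}$, which is precisely the toroidal symbol estimate.

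For the converse --- existence of the extension --- I would first fix once and for all a function $\widehat\mu\in C_c^\infty(\mathbb{R}^n)$ supported in $(-1,1)^n$ with $\sum_{j\in\mathbb{Z}^n}\widehat\mu(\,\cdot\,-j)\equiv1$ (a smooth partition of unity subordinate to the lattice, obtained by normalizing a positive bump), and set $\mu:=\mathcal{F}_{\mathbb{R}^n}^{-1}\widehat\mu\in\mathcal{S}(\mathbb{R}^n)$. By the Poisson summation formula and the support condition one gets $\mu(k)=\delta_{k,0}$ for $k\in\mathbb{Z}^n$ and, for every multi-index $\alpha$ and every polynomial $q$ of degree $<|\alpha|$, the cancellation $\sum_{k\in\mathbb{Z}^n}(\partial^\alpha\mu)(\xi-k)\,q(k)=0$. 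Then one puts
\[
\tilde p(x,\xi):=\sum_{k\in\mathbb{Z}^n}p(x,k)\,\mu(\xi-k),\qquad (x,\xi)\in\mathbb{T}^n\times\mathbb{R}^n;
\]
the series, together with all its termwise derivatives, converges absolutely because $\mu\in\mathcal{S}(\mathbb{R}^n)$ and $p(\cdot,k)$ grows polynomially in $k$, so $\tilde p$ is smooth, and clearly $\tilde p(x,m)=p(x,m)$ for $m\in\mathbb{Z}^n$. Since $\partial_x^\beta$ passes through the sum and replaces $p$ by $\partial_x^\beta p$ (raising the order by $\delta|\beta|$), it suffices to estimate $\partial_\xi^\alpha\tilde p$. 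Given $\xi$, pick a nearest lattice point $k_*$ and let $q_\xi$ be the Newton forward-difference interpolating polynomial of $p(x,\cdot)$ based at $k_*$ of degree $<|\alpha|$, whose coefficients are the differences $\Delta_\xi^\gamma p(x,k_*)$, $|\gamma|<|\alpha|$; by the cancellation property, $\partial_\xi^\alpha\tilde p(x,\xi)=\sum_k\bigl(p(x,k)-q_\xi(k)\bigr)(\partial^\alpha\mu)(\xi-k)$. A discrete Taylor remainder estimate bounds $|p(x,k)-q_\xi(k)|$ by $|k-k_*|^{|\alpha|}$ times a supremum of $|\Delta_\xi^\gamma p|$ with $|\gamma|=|\alpha|$ over lattice points near $k_*$, which on the range $|k-\xi|\lesssim\langle\xi\rangle$ is $\lesssim\langle k-\xi\rangle^{|\alpha|}\langle\xi\rangle^{m-\rho|\alpha|}$; the complementary range is harmless, since there $|(\partial^\alpha\mu)(\xi-k)|$ decays faster than any power of $\langle\xi\rangle$ while $p-q_\xi$ grows only polynomially. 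Summing against $|(\partial^\alpha\mu)(\xi-k)|\lesssim_N\langle\xi-k\rangle^{-N}$ and using $\sum_k\langle\xi-k\rangle^{|\alpha|-N}<\infty$ then gives $|\partial_\xi^\alpha\tilde p(x,\xi)|\lesssim\langle\xi\rangle^{m-\rho|\alpha|}$, i.e. $\tilde p\in S^m_{\rho,\delta}(\mathbb{T}^n\times\mathbb{R}^n)$.

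For uniqueness modulo $S^{-\infty}$, suppose $\tilde p_1,\tilde p_2$ both restrict to $p$, and set $r:=\tilde p_1-\tilde p_2\in S^m_{\rho,\delta}(\mathbb{T}^n\times\mathbb{R}^n)$; then $r$ and all its $x$-derivatives vanish on $\mathbb{T}^n\times\mathbb{Z}^n$, hence $\Delta_\xi^\gamma r(x,k)=0$ for every $\gamma$ and every $k\in\mathbb{Z}^n$. I would show $r\in S^{m-\rho}_{\rho,\delta}$: for fixed $(x,\xi)$ with nearest lattice point $k_*$, Taylor's formula gives $\partial_\xi^\alpha r(x,\xi)=\partial_\xi^\alpha r(x,k_*)+O(\langle\xi\rangle^{m-\rho(|\alpha|+1)})$, so it is enough to control $\partial_\xi^\alpha r$ at lattice points; and the mean-value identity $\Delta_\xi^\alpha r(x,k)=\int_{[0,1]^{|\alpha|}}\partial_\xi^\alpha r(x,k+(\cdots))\,dt$ --- whose left-hand side vanishes --- together with $|\partial_\xi^{\alpha+e_j}r|\lesssim\langle\,\cdot\,\rangle^{m-\rho(|\alpha|+1)}$ on the bounded domain of integration forces $|\partial_\xi^\alpha r(x,k)|\lesssim\langle k\rangle^{m-\rho(|\alpha|+1)}$ there. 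Iterating this step is legitimate because $\rho>0$, so $m-N\rho\to-\infty$ and $r\in\bigcap_{M}S^M_{\rho,\delta}=S^{-\infty}(\mathbb{T}^n\times\mathbb{R}^n)$.

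The step I expect to be the main obstacle is the symbol estimate for the constructed $\tilde p$: a direct bound on $\partial_\xi^\alpha\tilde p$ yields only $\langle\xi\rangle^{m+\delta|\beta|}$, with no gain from the $\xi$-derivatives, and it is exactly the device of subtracting the Newton interpolating polynomial of $p$ --- so that the annihilation of low-degree polynomials by $q\mapsto\sum_k(\partial^\alpha\mu)(\xi-k)\,q(k)$ can be used --- that lets the difference estimates $|\Delta_\xi^\alpha p|\lesssim\langle\xi\rangle^{m-\rho|\alpha|}$ produce the factor $\langle\xi\rangle^{-\rho|\alpha|}$. The remaining ingredients (the discrete Taylor remainder, the splitting of the lattice sum into two ranges, the Peetre-type inequalities $\langle k\rangle^{s}\lesssim\langle\xi\rangle^{s}\langle\xi-k\rangle^{|s|}$) are routine; a detailed account can also be recovered from \cite[Section~4.5]{ruzhansky-turunen}.
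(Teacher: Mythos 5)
The paper does not actually prove this statement: it is quoted as a known consequence of \cite[Section~4.5]{ruzhansky-turunen}, so there is no in-paper proof to compare against. Your argument is correct and is essentially the standard proof from that reference: the extension $\tilde p(x,\xi)=\sum_k p(x,k)\mu(\xi-k)$ with $\widehat\mu$ a smooth lattice partition of unity compactly supported in $(-1,1)^n$ (so that $\mu(k)=\delta_{k,0}$ and $\sum_k(\partial^\alpha\mu)(\xi-k)q(k)=0$ for $\deg q<|\alpha|$ by Poisson summation), the easy converse via $\Delta_{\xi_j}=\int_0^1\partial_{\xi_j}(\cdot+te_j)\,dt$, and uniqueness by iterating the gain $S^m\to S^{m-\rho}$ for symbols vanishing on the lattice, which needs $\rho>0$. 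The one genuine variation is in how you extract the $\langle\xi\rangle^{-\rho|\alpha|}$ gain: Ruzhansky--Turunen write $\partial^\alpha\mu$ as an iterated backward difference of another Schwartz function and sum by parts, so that $\partial_\xi^\alpha\tilde p(x,\xi)=\sum_k\phi_\alpha(\xi-k)\Delta_\xi^\alpha p(x,k)$ directly, whereas you subtract the Newton interpolating polynomial and invoke the discrete Taylor remainder; these are equivalent uses of the same cancellation, yours being slightly more hands-on. Two small points of care in your write-up: the remainder bound involves the supremum of $|\Delta_\xi^\gamma p|$, $|\gamma|=|\alpha|$, over the lattice box between $k_*$ and $k$ (not just points near $k_*$), which is harmless on the range $|k-\xi|\lesssim\langle\xi\rangle$ and is absorbed by the rapid decay of $\partial^\alpha\mu$ elsewhere; and the polynomial-annihilation identity requires $\widehat\mu$ to vanish in a neighbourhood of every nonzero lattice point, i.e.\ compact support strictly inside the open cube, which your construction does provide.
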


\begin{remark}
    When we employ this extension, the definition of its corresponding operator may be adjusted to 
    \begin{equation*}
        \mathrm{Op}(\tilde{p})f(x) = \int_{\mathbb{T}^n} \left[ \int_{\mathbb{R}^n} e^{i2\pi (x-y) \cdot \xi} \tilde{p}(x, \xi)\diff \xi \right] f(y) \diff y ,
    \end{equation*}
    so the Schwartz kernel of the operator, in the sense of distributions, is defined as the integral 
    \begin{equation*}
        \tilde{k}(x, y) = \int_{\mathbb{R}^n} e^{i2\pi (x-y) \cdot \xi} \tilde{p}(x, \xi)\diff \xi.
    \end{equation*}
    This kernel representation allows us to use techniques such as integration by parts when discussing properties of the Schwartz kernel.
\end{remark}
Hence, there is a correspondence between toroidal symbols with discrete and continuous frequency domains. This translates to the corresponding operators as well.
\begin{theorem}[Equivalence of operator classes]
    Let $m \in \mathbb{R}$, $0\leq\delta<1$, $0< \rho \leq 1$. Then 
        \begin{equation*}
            \Psi^m_{\rho,\delta}(\mathbb{T}^n\times \mathbb{R}^n) = \Psi^m_{\rho,\delta}(\mathbb{T}^n\times \mathbb{Z}^n).
        \end{equation*}
\end{theorem}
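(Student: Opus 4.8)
The plan is to read this off the symbol-level equivalence \cref{theo:equivalence-symbols}, which is the only substantive ingredient; the remaining work is to verify that the discrete quantization over $\mathbb{Z}^n$ and the continuous Kohn--Nirenberg quantization over $\mathbb{R}^n$ produce literally the same operator on periodic functions whenever the symbols are related by restriction.

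First I would record the elementary computation that links the two quantizations. Let $f \in C^\infty(\mathbb{T}^n)$, regarded as a periodic distribution on $\mathbb{R}^n$; its Euclidean Fourier transform is the sum of Dirac masses $\widehat f = \sum_{\eta\in\mathbb{Z}^n}(\mathcal F_{\mathbb{T}^n}f)(\eta)\,\delta_\eta$, the series converging in $\mathcal S'(\mathbb{R}^n)$ since $\mathcal F_{\mathbb{T}^n}f \in \mathcal S(\mathbb{Z}^n)$. Hence, for any $\tilde p \in S^m_{\rho,\delta}(\mathbb{T}^n\times\mathbb{R}^n)$, writing $p:=\tilde p|_{\mathbb{T}^n\times\mathbb{Z}^n}$,
\begin{equation*}
  \mathrm{Op}(\tilde p)f(x) = \int_{\mathbb{R}^n} e^{i2\pi x\cdot\xi}\,\tilde p(x,\xi)\,\widehat f(\xi)\,\diff\xi = \sum_{\eta\in\mathbb{Z}^n} e^{i2\pi x\cdot\eta}\,\tilde p(x,\eta)\,(\mathcal F_{\mathbb{T}^n}f)(\eta) = \mathrm{Op}(p)f(x),
\end{equation*}
where the last series converges absolutely and uniformly in $x$ because $\tilde p(x,\cdot)$ grows at most polynomially while $\mathcal F_{\mathbb{T}^n}f$ decays rapidly. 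Since $C^\infty(\mathbb{T}^n)$ is dense and both operators extend continuously to $\mathcal D'(\mathbb{T}^n)$, the identity $\mathrm{Op}(\tilde p) = \mathrm{Op}(p)$ holds as operators on periodic distributions.

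Next I would close the argument in two inclusions. The inclusion $\Psi^m_{\rho,\delta}(\mathbb{T}^n\times\mathbb{R}^n) \subseteq \Psi^m_{\rho,\delta}(\mathbb{T}^n\times\mathbb{Z}^n)$ follows from the display above together with the (easy) fact that the restriction to $\mathbb{T}^n\times\mathbb{Z}^n$ of a symbol in $S^m_{\rho,\delta}(\mathbb{T}^n\times\mathbb{R}^n)$ lies in $S^m_{\rho,\delta}(\mathbb{T}^n\times\mathbb{Z}^n)$, the partial difference operators $\Delta^\alpha_\xi$ being dominated by $\xi$-derivatives via the mean value theorem, as in \cite[section~4.5]{ruzhansky-turunen}. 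For the reverse inclusion, given $T=\mathrm{Op}(p)$ with $p\in S^m_{\rho,\delta}(\mathbb{T}^n\times\mathbb{Z}^n)$, I invoke \cref{theo:equivalence-symbols} to produce an extension $\tilde p \in S^m_{\rho,\delta}(\mathbb{T}^n\times\mathbb{R}^n)$ with $\tilde p|_{\mathbb{T}^n\times\mathbb{Z}^n}=p$; the display then gives $T=\mathrm{Op}(\tilde p)\in\Psi^m_{\rho,\delta}(\mathbb{T}^n\times\mathbb{R}^n)$. Combining the two inclusions yields the asserted equality.

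Once \cref{theo:equivalence-symbols} is available the proof is essentially bookkeeping, so I do not expect a genuine obstacle; the only point demanding a line of care is the interchange between the Euclidean integral and the discrete sum — i.e.\ identifying $\widehat f$ with a sum of Dirac masses and checking absolute convergence of the resulting series — which is precisely where the rapid decay of the toroidal Fourier transform and the polynomial growth of the symbol are used.
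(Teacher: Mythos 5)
Your argument is correct and is essentially the standard one: the paper states this theorem without proof, citing \cite[Section~4.5]{ruzhansky-turunen}, and the computation you give (identifying $\widehat f$ with a Dirac comb with rapidly decaying coefficients, evaluating termwise, and then deducing the two inclusions from \cref{theo:equivalence-symbols}) is precisely the argument underlying that citation. The one point worth making explicit is that $\mathrm{Op}(\tilde p)$ applied to a periodic $f$ must be understood as the Euclidean quantization of $f\in\mathcal{S}'(\mathbb{R}^n)$ (equivalently, with the kernel $\tilde k(x,\cdot)$ periodized over $\mathbb{Z}^n$, rather than integrated over a single fundamental domain), which is exactly the convention your Dirac-mass computation implements.
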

Now, we define some function spaces on the torus taking values on a Banach space $X$, used to prove our boudedness results.

\begin{definition}[Bochner-Lebesgue spaces]
    \cite[pp.~9]{defrancia} For a Banach space $X$ let us denote $L^p(\mathbb{T}^n;X)$, the \textit{Bochner-Lebesgue space}, consisting of strongly measureable $X$-valued functions $f$ defined on $\mathbb{T}^n$ such that 
    \begin{align*}
        \|f\|_{L^p(\mathbb{T}^n;X)} := \left( \int_{\mathbb{T}^n} \|f(x)\|_X^p \diff x \right)^{1/p} < \infty \; &, \quad 1\leq p < \infty,\\ 
        \|f\|_{L^\infty(\mathbb{T}^n;X)} := \esssup_{x \in \mathbb{T}^n} \|f(x)\|_X  < \infty. &
    \end{align*}
    When $X = \mathbb{C}$, we simply refer to these spaces as $L^p(\mathbb{T}^n).$ By simplifying the notation, we also use $\|\cdot\|_p$ for the $L^p$-norm.
    \label{def:bochner-lebesgue}
\end{definition}
We recall the definition of bounded linear operators between Banach spaces in order to define operators with operator valued kernel.
\begin{definition}[Bounded operators between Banach spaces $\mathcal{B}(X,Y)$]
    We say $T:X\rightarrow Y$ is a bounded linear operator between Banach spaces, namely $T \in \mathcal{B}(X, Y)$, if 
    \begin{equation*}
        \|Tx\|_Y \leq C\|x\|_X  \quad \text{for every } x\in X.
    \end{equation*}
    Also, the norm is defined as 
    \begin{equation*}
        \|T\|_{\mathcal{B}(X,Y)} := \inf \left\{ C \in \mathbb{R}^+ :  \|Tx\|_Y \leq C\|x\|_X \quad \text{for every } x\in X \right\}.
    \end{equation*}
    When $X$ and $Y$ are clear from context, we will simply use $\|\cdot\|_\mathcal{B}$.
\end{definition}
In order to prove continuity properties of a more general class of operators, we proceed to define operators with operator valued kernel. 
\begin{definition}[Operator valued kernel]
\cite[pp.~29]{defrancia} We say that an operator $T:C^\infty(\mathbb{T}^n;X)\rightarrow C^\infty(\mathbb{T}^n;Y)$ has an operator valued kernel if it can be written as 
\begin{equation*}
    Tf(x) = \int_{\mathbb{T}^n}k(x, y)f(y)\diff y,
\end{equation*}
 where $k:\mathbb{T}^n \times \mathbb{T}^n \rightarrow \mathcal{B}(X,Y)$, called the kernel, is such that $\|k(x, \cdot)\|_{\mathcal{B}(X,Y)}$ is integrable away of $x \in \mathbb{T}^n $.
    \label{def:operator-kernel}
\end{definition}
It has been shown that toroidal pseudo-differential operators in the Hörmander class of order zero are not bounded on $L^p(\mathbb{R}^n)$, in general for $p=1,\infty$, see e.g. the Ph.D. thesis of Wang \cite{wang}. This counterexample can be extended to the torus. Hence, it is required to define adequate subspaces from which we can interpolate to $L^p(\mathbb{T}^n)$ spaces for $1<p<\infty$. Those spaces happen to be the Hardy and BMO spaces, see \cite[pp.~159]{fefferman-stein}.
\begin{definition}[Hardy spaces $H^p(\mathbb{T}^n;X)$]
    For a Banach space $X$, we say that $a:\mathbb{T}^n\rightarrow X$ is an $H^{p, q}(\mathbb{T}^n;X)$\textit{-atom} if $a$ is supported on a ball $B \subset \mathbb{T}^n$ so that 
    \begin{equation*}
        \|a\|_{L^q(\mathbb{T}^n;X)} \leq |B|^{1/q-1/p},
    \end{equation*}
    and has zero moments up to $n(1/p -1)$. Also, we say that $f \in H^{p,q}(\mathbb{T}^n;X)$ if it can be written as  
    \begin{equation*}
        f = \sum_{j \in \mathbb{Z}^+}\lambda_ja_j \;\; \text{ where } \;\; \sum_{j \in \mathbb{Z}^+}|\lambda_j|^p < \infty,
    \end{equation*}
    and $a_j$ are $H^{p,q}(\mathbb{T}^n;X)$-atoms. An expression of this form is called a $(p, q)$-\textit{atomic decomposition of }$f$. Also, we define the $H^{p,q}(\mathbb{T}^n;X)$\textit{-norm} as 
    \begin{equation*}
        \|f\|_{H^{p,q}(\mathbb{T}^n;X)} := \inf \left(\sum_{j \in \mathbb{Z}^+}|\lambda_j|^p\right)^{1/p},
    \end{equation*}
    where the infimum is taken over all $(p,q)$-atomic decompositions of $f$. As in the scalar value case, $H^{p,q}(\mathbb{T}^n;X)=H^{p,r}(\mathbb{T}^n;X)$ even when $q\neq r$. Hence, we will simply denote the Hardy spaces by $H^{p}(\mathbb{T}^n;X)$, and when $X=\mathbb{C}$, by $H^{p}(\mathbb{T}^n)$
\end{definition}

\begin{definition}[Bounded Mean Oscillation space $\mathrm{BMO}(\mathbb{T}^n;X)$]
    We say $f \in \mathrm{BMO}(\mathbb{T}^n;X)$, namely, $f$ has \textit{bounded mean oscillation}, if 
    \begin{equation*}
        \|f\|_{\mathrm{BMO}(\mathbb{T}^n;X)}' := \sup_{B\subset\mathbb{T}^n} \frac{1}{|B|} \int_B \left\| f(x) - f_B \right\|_X \diff x < \infty, \quad \mathrm{ where } \quad f_B 
        := \frac{1}{|B|}\int_B f(x)\diff x,
    \end{equation*}
    and $B$ are balls contained on $\mathbb{T}^n$.
\end{definition}

In \cite{fefferman-BMO}, Charles Fefferman proved the famous result that $\mathrm{BMO}(\mathbb{R}^n)$ is the dual space of the Hardy space $H^1(\mathbb{R}^n)$. This result was extended to homogeneous groups, particularly the torus, see Folland and Stein \cite{folland-stein}. Moreover, if $X'$ satisfies the Radon-Nikodym property (in particular if $X$ is reflexive), then $\mathrm{BMO(\mathbb{T}^n;X')}$ is the dual of $H^1(\mathbb{T}^n;X)$. This duality can be understood on the following sense:
\begin{itemize}
    \item[(a)] For any $\phi \in \mathrm{BMO}(\mathbb{T}^n;X')$ we can define the functional $f\mapsto \int_{\mathbb{T}^n}f(x)\phi(x)\diff x$ which extends to a bounded functional on $H^1(\mathbb{T}^n;X)$.
    \item[(b)] Conversely, any continuous functional on $H^1(\mathbb{T}^n;X)$ can be identified with a functional defined as in (a) for a unique function $\phi \in \mathrm{BMO}(\mathbb{T}^n;X')$.
\end{itemize}
Now, we define maximal operators that would help us describe and obtain continuity properties for the operators in question.
\begin{definition}
    For a Banach space $X$ and any $f\in L^1(\mathbb{T}^n;X)$ we define the \textit{$p$-maximal function of Hardy-Littlewood} as 
    \begin{equation*}
        M_pf(x) = \sup_{B\ni x}\left(  \frac{1}{|B|}\int_B \|f(y)\|_X^p \diff y  \right)^{1/p}.
    \end{equation*}
    Moreover, we define the \textit{sharp maximal operator} as 
    \begin{equation*}
        f^\# (x) = \sup_{B\ni x} \inf_{c\in X} \left(  \frac{1}{|B|}\int_B \|f(y)-c\|_X\diff y  \right).
    \end{equation*}
\end{definition}
Notice that $\|f^\#\|_\infty$ is equivalent to $\|f\|_{\mathrm{BMO}}$. Therefore, we may employ this characterization when we consider it convenient. In our further analysis, we define the following "dyadic" decomposition in the annulus
    \begin{equation}
        A_j(z, \sigma) = \{ x \in \mathbb{T}^n : 2^j\sigma < |x-z|<2^{j+1}\sigma \} \; , \quad j=1, 2, 3, ...
    \end{equation}
\begin{remark}
    Note that $\mathbb{T}^n$ is contained in any ball with radius greater than $\sqrt{n}/2$. Hence, for any given $\sigma>0$, there is $N_\sigma\in \mathbb{Z}^+$ such that 
    \begin{equation}
        \frac{\sqrt{n}}{2\sigma} < 2^{N_\sigma} \leq \frac{\sqrt{n}}{\sigma}.
    \end{equation}
    Thus, this "dyadic" decomposition is finite in the case of the torus and $2^{N_\sigma} \sim \sigma^{-1}$.
    \label{rem:N-sigma}
\end{remark}
 We proceed to define a condition for operators with operator valued kernel that will be useful in proving continuity properties for these operators.
\begin{definition}[$D_{r,\alpha}$ condition] Let $1\leq r\leq\infty$ and $0<\alpha\leq1$. We say that an operator $T:C^\infty(\mathbb{T}^n;X)\rightarrow C^\infty(\mathbb{T}^n;Y)$ satisfies the $D_{r,\alpha}$ \textit{condition} if its associated operator valued kernel $k:=k(x,y)$ is continuous away from the diagonal of $\mathbb{T}^n\times\mathbb{T}^n$ and there exists a sequence $\{d_j\}\in \ell^1$ such that every $\sigma>0$ we have that 
\begin{equation*}
    \left( \int_{A_j(z,\sigma^\alpha)} \|k(x, y) - k(x, z)\|^r_\mathcal{B} \diff x \right)^{1/r} \leq d_j|A_j(z,\sigma^\alpha)|^{-1/r'}, \quad j=1,2,... ,
\end{equation*}
whenever $|y-z|<\sigma$. Moreover, we also require that $\tilde{k}(x, y) := k(y, x)$ satisfies these estimates.
    \label{def:D-r-alpha}
\end{definition}
A limiting case of this condition is stated as follows.
\begin{remark}[$D_\alpha$ condition]
    We say the operator $T$ from \cref{def:D-r-alpha} satisfies the $D_\alpha$ \textit{condition} if for some $0<\omega\leq1$, $0<\alpha\leq1$ we have that
    \begin{equation*}
        \|k(x,y)-k(x, z)\|_\mathcal{B}+\|k(y, x)-k(z, x)\|_\mathcal{B} \leq C \frac{|y-z|^\omega}{|x-z|^{n+\omega/\alpha}},
    \end{equation*}
    when $2|y-z|^\alpha \leq |x - z|$ for every $x, y, z\in \mathbb{T}^n$. 
    \label{rem:D-alpha-condition}
\end{remark}
Now, we define an object that has behavior similar to a $H^p$-atom. These functions will be helpful in proving continuity properties on $H^p$.
\begin{definition}[Molecule]
    For a Banach space $Y$, we say that $M:\mathbb{T}^n\rightarrow Y$ is a $(p, \theta, \mu)$\textit{-molecule} related to the ball $B(z, \sigma) \subset \mathbb{T}^n$ if it satisfies the following:
    \begin{itemize}
    \item If $\sigma \geq 1$:
    \begin{itemize}
        \item[($\mathbf{M_1}$)] \begin{equation*}
            \int \|M(x)\|_Y^2\diff x \leq C \sigma^{n(1-2/p)}.
        \end{equation*}
        \item[($\mathbf{M_2}$)] For some $2n/p-n < \mu < n + (2\omega/\alpha)$, we have that
        \begin{equation*}
            \int \|M(x)\|_Y^2|x-z|^\mu \diff x \leq C \sigma^{\mu + n(1-2/p)}.
        \end{equation*}
    \end{itemize}
    \item If $\sigma < 1$:
    \begin{itemize}
        \item[($\mathbf{M_1'}$)] \begin{equation*}
            \int \|M(x)\|_Y^2\diff x \leq C \sigma^{n(1/q-2/p)}.
        \end{equation*}
        \item[($\mathbf{M_2'}$)] For some $2n/p-n < \mu < (2\beta/(1-\theta)) \leq n + (2\omega/\alpha)$, we have that
        \begin{equation*}
            \int \|M(x)\|_Y^2|x-z|^\mu \diff x \leq C \sigma^{\theta\mu + n(1/q-2/p)}, 
        \end{equation*}
        where 
        \begin{equation*}
            \theta = \frac{n/2 + \omega - \beta}{n/2+\omega/\alpha}\leq \alpha \quad \text{ and } \quad \frac{1}{q} = \frac{1}{2} + \frac{\beta}{n}.
        \end{equation*}
    \end{itemize}
    \end{itemize}
    Moreover, it must satisfy the cancellation property, namely, that $\smallint M(x)\diff x = 0$.
    \label{def:molecule}
\end{definition}

\section{Main results}
\label{section:main}

In this paper, we set $\lambda := \max\{0, (\delta-\rho)/2\}$.
\subsection{Continuity of pseudo-differential operators from $H^p$ into $L^p$}
\label{section:Hp-Lp}
First, we consider the general case for operators with operator valued kernel. 
\begin{theorem}
\label{theo:operator-kernel-2j}
    Let $T$ be an operator with operator valued kernel $k:=k(x,y)$ satisfying for some $0<\omega\leq1$ the estimates
    \begin{equation}
        \int_{A_j(z, \sigma)} \| k(x, y) - k(x, z)\|_{\mathcal{B}(X, Y)} \diff x \leq C2^{-j\omega}, \quad \text{if} \quad \sigma\geq1;
    \end{equation}
    \begin{equation}
        \int_{A_j(z, \sigma^\gamma)} \| k(x, y) - k(x, z)\|_{\mathcal{B}(X,Y)} \diff x \leq C2^{-j\omega/\alpha}\sigma^{\omega(1-\gamma/\alpha)}, \quad \text{if} \quad \sigma<1,
    \end{equation}
    for $|y-z|<\sigma$ and any $0<\gamma\leq\alpha\leq1$. Moreover, suppose that the operator $T$ extends to a bounded operator from $L^2(\mathbb{T}^n; X)$ into $L^2(\mathbb{T}^n; Y)$ and from $L^q(\mathbb{T}^n; X)$ into $L^2(\mathbb{T}^n; Y)$, where 
    \begin{equation}
        \frac{1}{q} = \frac{1}{2} + \frac{\beta}{n}, \quad \text{for some} \quad (1-\alpha)\frac{n}{2} \leq \beta < \frac{n}{2}.
    \end{equation}
    Then, the operator $T$ is bounded from $H^p(\mathbb{T}^n; X)$ into $L^p(\mathbb{T}^n;Y)$, for $1\geq p \geq p_0$ when $\alpha<1$, where
    \begin{equation*}
             \frac{1}{p_0} = \frac{1}{2}+\frac{\beta(\omega/\alpha + n/2)}{n(\omega/\alpha-\omega+\beta)},
    \end{equation*}  
    and for $1\geq p > p_0=n/(n+\omega)$ when $\alpha=1$.
\end{theorem}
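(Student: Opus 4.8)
The plan is to run the standard atomic argument, taking advantage of the fact that on the torus the annular decomposition of \cref{rem:N-sigma} is finite. Since $0<p\le1$, the functional $\|\cdot\|_{L^p}^p$ is subadditive, and every $f\in H^p(\mathbb T^n;X)$ admits an atomic decomposition $f=\sum_j\lambda_j a_j$ into $(p,2)$-atoms with $\sum_j|\lambda_j|^p\sim\|f\|_{H^p(\mathbb T^n;X)}^p$; after the routine check (using the $L^2\to L^2$ boundedness of $T$ and a density argument) that $Tf=\sum_j\lambda_j Ta_j$ with convergence in the $p$-metric, it suffices to prove a \emph{uniform} bound $\|Ta\|_{L^p(\mathbb T^n;Y)}\le C$ over all $(p,2)$-atoms $a$. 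I would use only the zeroth‑order cancellation $\int a\,dx=0$, which matches the single smoothness exponent $\omega$ in the kernel hypotheses.

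Fix such an atom, supported on $B=B(z,\sigma)$, so that $\|a\|_{L^2(\mathbb T^n;X)}\le|B|^{1/2-1/p}\sim\sigma^{n(1/2-1/p)}$, and hence by Hölder on $B$ also $\|a\|_{L^1}\lesssim\sigma^{n(1-1/p)}$ and $\|a\|_{L^q}\lesssim\sigma^{n(1/q-1/p)}$. When $\sigma\ge1$ the argument is easy: by \cref{rem:N-sigma} the set $\mathbb T^n\setminus B(z,2\sigma)$ is covered by boundedly many annuli $A_j(z,\sigma)$; one estimates the contribution of $B(z,2\sigma)$ by Hölder with exponent $2/p$ together with the $L^2\to L^2$ bound, and the contribution of each far annulus by Hölder with exponent $1/p$, the cancellation of $a$, and the first kernel hypothesis. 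Every power of $\sigma$ produced is nonnegative, so $\|Ta\|_{L^p}^p\lesssim1$ with no restriction on $p$.

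The substantive case is $\sigma<1$. Here I would fix an auxiliary exponent $\gamma\in(0,\alpha]$ (to be optimized at the end), and split $\mathbb T^n$ into the enlarged ball $B^{\ast}:=B(z,c\sigma^{\gamma})\supset B$ and the finite family of annuli $A_j(z,\sigma^{\gamma})$, $1\le j\le N_{\sigma^{\gamma}}$, with $2^{N_{\sigma^{\gamma}}}\sim\sigma^{-\gamma}$. Over $B^{\ast}$: Hölder with exponent $2/p$ gives $\int_{B^{\ast}}\|Ta\|_Y^p\le|B^{\ast}|^{1-p/2}\|Ta\|_{L^2}^p$, and the $L^q\to L^2$ hypothesis together with the atom size yields $\|Ta\|_{L^2}\lesssim\|a\|_{L^q}\lesssim\sigma^{n(1/q-1/p)}$; using $1/q=1/2+\beta/n$ this leaves the power $\sigma^{\,\gamma n(1-p/2)+pn/q-n}$, which is harmless exactly when $p\beta\ge n(1-\gamma)(1-p/2)$. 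Over each far annulus: the cancellation gives $Ta(x)=\int_B[k(x,y)-k(x,z)]a(y)\,dy$, and Hölder with exponent $1/p$, Fubini, the second kernel hypothesis, and $\|a\|_{L^1}\lesssim\sigma^{n(1-1/p)}$ produce $\int_{A_j}\|Ta\|_Y^p\lesssim 2^{\,j(n(1-p)-p\omega/\alpha)}\,\sigma^{\,\gamma n(1-p)+p\omega(1-\gamma/\alpha)+pn-n}$; summing the finite geometric series in $j$ (bounded if $n(1-p)-p\omega/\alpha<0$, otherwise contributing $\sigma^{-\gamma(n(1-p)-p\omega/\alpha)}$) again leaves a power of $\sigma$ that must be nonnegative.

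I expect the only genuinely delicate point to be the bookkeeping of the free parameter $\gamma$, i.e. showing the near‑ and far‑part constraints can be met simultaneously precisely on the stated $p$‑range. Writing $t=1-\gamma$ and $1-\gamma/\alpha=(t-(1-\alpha))/\alpha$, the far‑part geometric sum is summable iff $p>n/(n+\omega/\alpha)$, in which regime the two requirements become $t\le p\beta/(n(1-p/2))$ and $t\,(p\omega-\alpha n(1-p))\ge p\omega(1-\alpha)$; one checks that the far‑part lower bound for $t$ is $\ge1-\alpha$ and the near‑part upper bound is $<1$ (using $\beta<n/2$), so an admissible $t\in[1-\alpha,1)$, hence an admissible $\gamma\in(0,\alpha]$, exists exactly when
\[
\frac{p\omega(1-\alpha)}{p\omega-\alpha n(1-p)}\ \le\ \frac{p\beta}{n(1-p/2)}.
\]
Clearing denominators this is $\omega(1-\alpha)\,n(1-p/2)\le\beta\,(p\omega-\alpha n(1-p))$, which rearranges to $1/p\le 1/p_0$ with $p_0$ exactly as in the statement (so $p_0=1$ when $\beta=(1-\alpha)n/2$, and $p_0=n/(n+\omega)$ when $\alpha=1$); in the remaining regime $p\le n/(n+\omega/\alpha)$ the far‑part power forces $p\ge n/(n+\omega)$, which for $\alpha<1$ is vacuous and for $\alpha=1$ leaves only the threshold $p=n/(n+\omega)$, where the geometric sum degenerates to $\sim\log(1/\sigma)$ and diverges — explaining the strict inequality in that case. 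Choosing such a $\gamma$ and adding the three contributions finishes the proof; the one torus‑specific ingredient throughout is the finiteness of the annular decomposition from \cref{rem:N-sigma}, which is what lets the geometric sums be summed with no extra corrections.
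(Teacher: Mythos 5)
Your argument is correct and follows essentially the same route as the paper: atomic reduction, splitting into the ball $B(z,c\sigma^{\gamma})$ handled by H\"older with exponent $2/p$ and the $L^2$ (resp.\ $L^q$--$L^2$) bound, far annuli handled by H\"older with exponent $1/p$, the atom's cancellation and the kernel hypotheses, and then optimizing $\gamma$ between the two resulting constraints, whose compatibility is exactly $p\ge p_0$. The only cosmetic differences are your use of $(p,2)$-atoms in place of the paper's $(p,\infty)$-atoms and your more explicit discussion of the endpoint $p=p_0$ when $\alpha=1$; your final bookkeeping in the variable $t=1-\gamma$ reproduces the paper's bounds \cref{eq:gamma-lowe-bound} and \cref{eq:gamma-upper-bound} and the same value of $p_0$.
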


\begin{proof}
    Let us fix $0< p \leq1$ ant let $a$ be an $H^{p,\infty}(\mathbb{T}^n;X)$-atom supported on the ball $B(z,\sigma)$. First, let us assume that $\sigma\geq1$, then we employ the dyadic decomposition and the cancellation property of $a$ to obtain
    \begin{align*}
        \int_{\mathbb{T}^n}\|Ta(x)\|^p_Y \diff x \leq & \int_{B(z,\sigma)}\|Ta(x)\|^p_Y \diff x \\ 
        & + \sum_{j=1}^{N_\sigma} \int_{A_j(z,\sigma)}\left( \int_{B(z,\sigma)} \|k(x, y) - k(x, z)\|_\mathcal{B} \|a(y)\|_X 
        \diff y\right)^p \diff x \\
        = & I_1 + I_2
    \end{align*}
    Using H\"older's inequality with exponent $2/p$, and the $L^2$ continuity of $T$ we obtain
    \begin{align*}
        I_1 \leq & \left( \int_{\mathbb{T}^n} \|Ta(x)\|_Y^2\diff x \right)^{p/2}\left( \int_{\mathbb{T}^n} \chi_{B(z,\sigma)}(x)\diff x \right)^{(2-p)/2} \\
        = & \|Ta\|_{L^2(\mathbb{T}^n;Y)}^p|B|^{(2-p)/2} \\
        \leq & C\|a\|_{L^2(\mathbb{T}^n;Y)}^p |B|^{(2-p)/2} \\
        \leq & C |B|^{(p-2)/2} |B|^{(2-p)/2}\leq C.
    \end{align*}
    For $I_2$, we use H\"older's inequality with exponent $1/p$ and the kernel estimates of the hypothesis to get
    \begin{align*}
        I_2 \leq & \sum_{j=1}^{N_\sigma}\int_{A_j(z,\sigma)}\left( \int_{B(z,\sigma)} \|k(x, y) - k(x, z)\|_\mathcal{B} |B|^{-1/p} 
        \diff y\right)^p \diff x \\
        \leq & \sum_{j=1}^{N_\sigma} \left(\int_{B(z,\sigma)} \int_{A_j(z,\sigma)} \|k(x, y) - k(x, z)\|_\mathcal{B} \diff x\diff y \right)^p \left( \int_{A_j(z,\sigma)} |B|^{-1/(1-p)} \diff x \right)^{1-p} \\
        \leq &C \sum_{j=1}^{N_\sigma}  2^{-j\omega p}|B|^p \cdot |B|^{-1}|A_j(z, \sigma)|^{1-p} \\
        \leq &C \sum_{j=1}^{N_\sigma} 2^{-j\omega p}\sigma^{np} \cdot \sigma^{-n}2^{jn(1-p)}\sigma^{n(1-p)}\\
        = & C\sum_{j=1}^{N_\sigma} 2^{j[n-(n+\omega)p]},
    \end{align*}
    which can be bounded by a constant $C>0$ whenever 
    \begin{equation}
        p > \frac{n}{n+\omega} \geq \frac{n}{n+\omega/\alpha}.
        \label{eq:base-p0}
    \end{equation}
    Now, let us consider the case $\sigma<1$. Then 
    \begin{align*}
        \int_{\mathbb{T}^n}\|Ta(x)\|^p_Y \diff x \leq & \int_{B(z,2\sigma^\gamma)}\|Ta(x)\|^p_Y \diff x \\ 
        & + \sum_{j=1}^{N_\sigma} \int_{A_j(z,\sigma^\gamma)}\left( \int_{B(z,\sigma)} \|k(x, y) - k(x, z)\|_\mathcal{B} \|a(y)\|_X 
        \diff y\right)^p \diff x \\
        = & I_1 + I_2,
    \end{align*}
    with $\gamma$ to be chosen later.  The first term can be estimated using H\"older's inequality with exponent $2/p$ and the $L^q$-$L^2$ boundedness of $T$ to obtain
    \begin{align*}
        I_1\leq & \left( \int_{\mathbb{T}^n} \|Ta(x)\|_Y^2\diff x \right)^{p/2}\left( \int_{\mathbb{T}^n} \chi_{B(z,2\sigma^\gamma)}(x)\diff x \right)^{(2-p)/2} \\
        = & \|Ta\|_{L^2(\mathbb{T}^n;Y)}^p|B(z,2\sigma^\gamma)|^{(2-p)/2} \\
        \leq & C\|a\|_{L^q(\mathbb{T}^n;X)}^p\sigma^{n\gamma(2-p)/2} \\
        \leq & C\left( \int_{B(z, \sigma)}|B|^{-q/p} \right)^{p/q}\sigma^{n\gamma(2-p)/2}  \\
        \leq & C\sigma^{-n}\sigma^{np/q}\sigma^{n\gamma(2-p)/2} = C\sigma^{n[\gamma(1-p/2) +p/q-1]}.
    \end{align*}
    Thus, since $1/q=1/2+\beta/n$ we can conclude that $I_1$ will be bounded whenever 
    \begin{equation}
        \gamma \geq \frac{2n - p(n + 2\beta)}{n(2-p)}, 
        \label{eq:gamma-lowe-bound}
    \end{equation}
    which is an decreasing function of $p$ and reaches $\gamma\geq1-2\beta/n$ when $p=1$. Hence the requirement $\beta \geq (1-\alpha) n/2$.
    On the other hand, using H\"older's inequality with exponent $1/p$ and the kernel estimate from the hypothesis, we obtain 
    \begin{align*}
        I_2 \leq & \sum_{j=1}^{N_{\sigma^\gamma}}\int_{A_j(z,2\sigma^\gamma)}\left( \int_{B(z,\sigma)} \|k(x, y) - k(x, z)\|_\mathcal{B} |B|^{-1/p} 
        \diff y\right)^p \diff x \\
        \leq & \sum_{j=1}^{N_{\sigma^\gamma}} \left(\int_{B(z,\sigma)} \int_{A_j(z,2\sigma^\gamma)} \|k(x, y) - k(x, z)\|_\mathcal{B} \diff x\diff y \right)^p \left( \int_{A_j(z,2\sigma^\gamma)} |B|^{-1/(1-p)} \diff x \right)^{1-p} \\
        \leq & \sum_{j=1}^{N_{\sigma^\gamma}}  C2^{-j\omega p/\alpha}\sigma^{\omega p(1-\gamma/\alpha)}|B|^p \cdot |B|^{-1}|A_j(z, 2\sigma^\gamma)|^{1-p} \\
        \leq & \sum_{j=1}^{N_{\sigma^\gamma}} C2^{-j\omega p/\alpha}\sigma^{\omega p(1-\gamma/\alpha)}\sigma^{np} \cdot \sigma^{-n}2^{jn(1-p)}2^{n(1-p)}\sigma^{n\gamma(1-p)}\\
        = & C\sigma^{-\gamma[p(n+\omega/\alpha)-n] +p(n+\omega)-n} \sum_{j=1}^{N_{\sigma^\gamma}} 2^{j[n-(n+\omega/\alpha)p]}
    \end{align*}
    Now, since $p$ must satisfy \cref{eq:base-p0}, we have that $n - (n+\omega/\alpha)p < 0$. Hence, we can bound the geometric sum by a constant and obtain that
    \begin{align*}
        I_2 \leq & C\sigma^{-\gamma[p(n+\omega/\alpha)-n] +p(n+\omega)-n},
    \end{align*}
    which can be estimated by a constant whenever 
    \begin{equation}
        \gamma \leq \frac{p(n+\omega)-n}{p(n+\omega/\alpha)-n},
        \label{eq:gamma-upper-bound}
    \end{equation}
    which is an increasing function of $p$ that reaches $\gamma \leq \alpha$ when $p=1$. Hence, the critical $p_0$ occurs when we equal the right hand sides of \cref{eq:gamma-lowe-bound} and \cref{eq:gamma-upper-bound}, completing the proof.
\end{proof} 

Now, we prove that under certain conditions, toroidal pseudo-differential operators satisfy the kernel estimates needed to use the previous theorem. 
\begin{theorem}
    Let $T\in \Psi^m_{\rho,\delta}(\mathbb{T}^n\times\mathbb{Z}^n)$, $0<\rho\leq1$, $0\leq\delta<1$ with kernel $k:=k(x, y)$. Then, 
    \begin{itemize}
        \item[a)] If $\sigma \geq \varepsilon>0$, and $j=1,2,3,...$,
        \begin{equation}
            \sup_{|y-z|<\sigma}\int_{A_j(z, \sigma)} |k(y, x) - k(z, x)|\diff x \leq C_\varepsilon2^{-j},
        \end{equation}
        \begin{equation}
            \sup_{|y-z|<\sigma}\int_{A_j(z, \sigma)} |k(x, y) - k(x, z)|\diff x \leq C_\varepsilon2^{-j},
        \end{equation}
        where $C_\varepsilon$ does not depend on $\sigma$, $j$, or $z$.
        \item[b)] If $m\leq -n[(1-\rho)/2 +\lambda]$, $0<\gamma\leq1$, $\sigma<1$, and $j=1,2,3,...$, 
        \begin{equation}
            \sup_{|y-z|<\sigma}\int_{A_j(z, \sigma^\gamma)} |k(x, y) - k(x, z)|\diff x \leq C2^{-j/\rho}\sigma^{1-\gamma/\rho}.
        \end{equation}
        
        \item[c)] If $m\leq -n(1-\rho)/2 $, $0<\gamma\leq1$, $\sigma<1$, and $j=1,2,3,...$, 
        \begin{equation}
            \sup_{|y-z|<\sigma}\int_{A_j(z, \sigma^\gamma)} |k(y, x) - k(z, x)|\diff x \leq C2^{-j/\rho}\sigma^{1-\gamma/\rho}.
            \label{eq:kernel-estimate-c}
        \end{equation}
        
    \end{itemize}
    \label{theo:pdo-kernel}
\end{theorem}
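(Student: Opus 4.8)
The plan is to transfer the problem to $\mathbb{R}^n$ through the smooth symbol extension, decompose the symbol dyadically in frequency, and estimate each dyadic kernel piece by two complementary devices — integration by parts for the off‑diagonal decay, and $L^2$‑bounds for the near‑diagonal contribution — combining them according to the geometry of the shell $A_j$. First I would invoke \cref{theo:equivalence-symbols} to fix an extension $\tilde p\in S^m_{\rho,\delta}(\mathbb{T}^n\times\mathbb{R}^n)$ of $p$. Since $\mathrm{Op}(p)$ and $\mathrm{Op}(\tilde p)$ differ by an operator whose symbol lies in $S^{-\infty}$, hence with a $C^\infty$ kernel on $\mathbb{T}^n\times\mathbb{T}^n$, and a smooth kernel satisfies (a)–(c) at once (bound the difference by $|y-z|$ times a supremum of a first derivative and use $|A_j|\lesssim(2^j\sigma^\gamma)^n\lesssim 1$ together with $2^{-j/\rho}\sigma^{1-\gamma/\rho}\gtrsim\sigma$), I may assume $k(x,y)=\int_{\mathbb{R}^n}e^{2\pi i(x-y)\cdot\xi}\tilde p(x,\xi)\,d\xi$. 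Fix a Littlewood–Paley partition $1=\sum_{\ell\ge0}\psi_\ell$ with $\psi_\ell$ supported in $\{|\xi|\sim2^\ell\}$ for $\ell\ge1$, and write $k=\sum_\ell k_\ell$ with $k_\ell(x,y)=\int e^{2\pi i(x-y)\xi}\tilde p(x,\xi)\psi_\ell(\xi)\,d\xi$.

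Integration by parts in $\xi$, using $|\partial_x^\beta\partial_\xi^\alpha(\tilde p\psi_\ell)|\lesssim 2^{\ell(m-\rho|\alpha|+\delta|\beta|)}$ on the $\ell$‑th shell, gives for every $N$
\[
|k_\ell(x,y)|+2^{-\ell}\big(|\nabla_xk_\ell(x,y)|+|\nabla_yk_\ell(x,y)|\big)\lesssim 2^{\ell(n+m)}(1+2^{\ell\rho}|x-y|)^{-N},
\]
so by the mean value theorem, whenever $|y-z|<\sigma$ and $|x-z|\sim|x-y|$,
\[
|k_\ell(x,y)-k_\ell(x,z)|+|k_\ell(y,x)-k_\ell(z,x)|\lesssim 2^{\ell(n+m)}\min(1,2^\ell\sigma)(1+2^{\ell\rho}|x-z|)^{-N}.
\]
Summing these shows $k$ and its first derivatives decay off the diagonal faster than any power of $|x-y|$ for $|x-y|\ge1$, and up to an $|x-y|^{-(n+m+1)/\rho}$ loss for $|x-y|\le1$. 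Part (a) is now immediate: if $\sigma\ge\varepsilon$ and $x\in A_j(z,\sigma)$ then $|x-y|\sim|x-z|\ge2^{j-1}\sigma\ge2^{j-1}\varepsilon$, so $|\nabla_yk(x,w)|\le C_\varepsilon|x-w|^{-n-1}$ for $w$ between $y$ and $z$, whence $|k(x,y)-k(x,z)|\le C_\varepsilon\sigma(2^{j-1}\sigma)^{-n-1}$; integrating over $A_j$ (of measure $\lesssim(2^j\sigma)^n$) gives $C_\varepsilon2^{-j}$, and the second estimate in (a) is the same with $\nabla_x$ in place of $\nabla_y$.

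For (b) and (c) put $R:=2^j\sigma^\gamma$; when $R\gtrsim1$ the kernel is smooth at scale $\gtrsim1$ on $A_j$ and the bound follows as in (a), so assume $R$ small and split the dyadic sum at the frequency $N\sim R^{-1/\rho}$ matched to the shell, $k=K^{\le}+K^{>}$. The high‑frequency part is controlled by the off‑diagonal decay above: since $2^{\ell\rho}R\gtrsim1$ there, $\int_{A_j}|K^{>}(x,y)-K^{>}(x,z)|\,dx$ (and the transposed version) is governed by the scale $\ell\sim\log N$, which one checks gives $\lesssim 2^{-j/\rho}\sigma^{1-\gamma/\rho}$. For the low‑frequency part of (b), Cauchy–Schwarz over $A_j$ gives $\int_{A_j}|K^{\le}(x,y)-K^{\le}(x,z)|\,dx\le|A_j|^{1/2}\,\|T^{\le}(\delta_y-\delta_z)\|_{L^2}$, and
\[
\|T^{\le}(\delta_y-\delta_z)\|_{L^2}\le\|\mathrm{Op}(\tilde p)\,J^{-(m+n\lambda)}\|_{L^2\to L^2}\,\|J^{m+n\lambda}P^{\le}(\delta_y-\delta_z)\|_{L^2}\lesssim N^{m+n\lambda+n/2+1}\sigma,
\]
the first factor being finite because $\mathrm{Op}(\tilde p)J^{-(m+n\lambda)}\in\mathrm{Op}(S^{-n\lambda}_{\rho,\delta})$ is $L^2$‑bounded — this is the only point where $\lambda=\max\{0,(\delta-\rho)/2\}$ is used; inserting $N\sim R^{-1/\rho}$ and $m\le-n[(1-\rho)/2+\lambda]$ makes the exponent of $R$ at least $-1/\rho$, giving the claim. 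For (c) the integration runs over the second (Fourier‑dual) slot of the kernel: for a fixed first slot $w$ the function $K^{\le}(w,\cdot)$ is the inverse Fourier transform of $\mathbf 1_{|\xi|\lesssim N}\,e^{2\pi iw\xi}\tilde p(w,\xi)$, so by Plancherel
\[
\|K^{\le}(y,\cdot)-K^{\le}(z,\cdot)\|_{L^2}=\big\|\mathbf 1_{|\xi|\lesssim N}\big(e^{2\pi iy\xi}\tilde p(y,\xi)-e^{2\pi iz\xi}\tilde p(z,\xi)\big)\big\|_{L^2}\lesssim\Big(\int_{|\xi|\lesssim N}\langle\xi\rangle^{2m}\min(1,\sigma\langle\xi\rangle)^2\,d\xi\Big)^{1/2}\lesssim\sigma N^{m+n/2+1},
\]
with no loss from the $x$‑derivatives of $\tilde p$ (the dominant term is the phase difference); hence the low part is $\lesssim R^{n/2}\sigma N^{m+n/2+1}\sim R^{n/2-(m+n/2+1)/\rho}\sigma$, which is $\lesssim R^{-1/\rho}\sigma=2^{-j/\rho}\sigma^{1-\gamma/\rho}$ precisely when $m\le-n(1-\rho)/2$ — no $\lambda$ — and this accounts for the asymmetry between (b) and (c).

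The main obstacle is the dyadic bookkeeping of the last step: verifying that the high‑frequency tails collapse to the exact power $2^{-j/\rho}\sigma^{1-\gamma/\rho}$ uniformly in $0<\gamma\le1$ (one must separate the regimes $R^{-1/\rho}\lessgtr\sigma^{-1}$ and handle the shells with $R$ not small), and establishing the sharp $L^2$‑estimate for the low‑frequency part with the correct $\lambda$‑dependence; the remaining manipulations are routine integrations by parts and geometric sums.
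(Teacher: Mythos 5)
Your overall architecture (extend the symbol via \cref{theo:equivalence-symbols}, decompose dyadically in frequency, treat near- and far-field contributions by $L^2$ methods and integration by parts respectively) is the same as the paper's, which uses a continuous Littlewood--Paley parameter $t$ in place of your discrete $\ell$; your part (a) and your low-frequency estimates for (b) and (c) are sound, and your Plancherel explanation of why (c) needs no $\lambda$ is correct. However, there is a genuine gap in the high-frequency part of (b) and (c), and it is not the ``dyadic bookkeeping'' you defer: the mechanism you propose there cannot reach the stated order threshold. Using the pointwise bound $|k_\ell(x,y)-k_\ell(x,z)|\lesssim 2^{\ell(n+m)}\min(1,2^\ell\sigma)(1+2^{\ell\rho}|x-z|)^{-N'}$ and integrating over $A_j$, the best one gets is
\begin{equation*}
\int_{A_j}|k_\ell(x,y)-k_\ell(x,z)|\,\diff x\;\lesssim\;2^{\ell(m+n(1-\rho))}\min(1,2^\ell\sigma)\,(2^{\ell\rho}R)^{-M},\qquad R:=2^j\sigma^\gamma,
\end{equation*}
and summing over $2^\ell\gtrsim R^{-1/\rho}$ yields $R^{-(m+n(1-\rho)+1)/\rho}\sigma$ (in the regime $R^{-1/\rho}\sigma\le1$), which is $\lesssim R^{-1/\rho}\sigma$ only when $m\le-n(1-\rho)$. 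Since $\delta<1$ forces $(1-\rho)/2+\lambda<1-\rho$ for $\rho<1$, the hypothesis $m\le-n[(1-\rho)/2+\lambda]$ admits orders for which your high-frequency tail exceeds the target by a positive power of $R^{-1}$ (e.g.\ $n=1$, $\rho=1/2$, $\delta=0$, $m=-1/4$ gives $R^{-5/2}\sigma$ against the required $R^{-2}\sigma$). You cannot repair this by switching to unweighted $L^2$ bounds on the high frequencies either: those carry no decay in $j$ and their sum over $2^\ell\ge R^{-1/\rho}$ diverges at the critical order.

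The missing ingredient is the one the paper imports from \cite[Theorem~3.4]{Cardona:Martinez}: for each frequency piece one applies Cauchy--Schwarz with the weight $(1+t^{2\rho}|x-z|^2)^{N}$, so that the first factor $\bigl(\int(1+t^{2\rho}|x-z|^2)^{N}|k(x,y,t)-k(x,z,t)|^2\diff x\bigr)^{1/2}\le C\sigma t^{1+\rho n/2}$ is an $L^2$-type bound (this is exactly where the order restriction and $\lambda$ enter, via Calder\'on--Vaillancourt-type estimates for the symbols $t^{\rho|\alpha|}\partial_\xi^\alpha(\tilde p\,\varphi(\langle\xi\rangle/t))$), while the second factor $\bigl(\int_{A_j}(1+t^{2\rho}|x-z|^2)^{-N}\diff x\bigr)^{1/2}\le Ct^{-\rho n/2}F(t^\rho 2^j\sigma^\gamma)$ supplies the decay in $j$. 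This weighted estimate exploits cancellation and off-diagonal localization \emph{simultaneously}, for all $t\le1/\sigma$, with the crude size estimate reserved only for $t>1/\sigma$; integrating $\sigma F(t^\rho R)$ in $\diff t/t$ then produces exactly $C\sigma R^{-1/\rho}=C2^{-j/\rho}\sigma^{1-\gamma/\rho}$. Your two-regime split, which applies only one mechanism at a time, cannot reproduce this, so you should replace the high-frequency step by the weighted Cauchy--Schwarz argument (or prove the weighted $L^2$ kernel estimate directly) before the proof closes.
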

\begin{proof}
    \begin{itemize}
        \item[a)] By \cite[Theorem~3.1]{Cardona:Martinez} and the triangle inequality we have that 
        \begin{align*}
            \int_{A_j(z, \sigma)} |k(y, x) - k(z, x)|\diff x \leq &  \int_{A_j(z, \sigma)} |k(y, x)| \diff x +\int_{A_j(z, \sigma)} |k(z, x)|\diff x\\
            \leq & C\int_{A_j(z, \sigma)} |x-y|^{-N} \diff x + C\int_{A_j(z, \sigma)} |x-z|^{-N} \diff x,
        \end{align*}
        for some $N\geq (m+n)/\rho$. Now, we have that $|x-y| \geq |x-z| - |z-y|>2^j\sigma -\sigma \geq 2^{j-1}\sigma$. Thus, since the torus has volume one, we obtain that 
        \begin{align*}
            \int_{A_j(z, \sigma)} |k(y, x) - k(z, x)|\diff x \leq & C(2^{j-1}\sigma)^{-N}  +  C(2^{j}\sigma)^{-N}  \\
            \leq & C(2^{j} \varepsilon)^{-N} \leq C_\varepsilon 2^{-j}.
        \end{align*}

        \item[b)] Let $\tilde{p}:=\tilde{p}(x,\xi)$ be the corresponding symbol of $T$ defined on $\mathbb{T}^n \times \mathbb{R}^n$, see \cref{theo:equivalence-symbols}.  Let $\varphi \in C_0^\infty(\mathbb{R})$ be supported in $[1/2, 1]$ such that  
        \begin{equation*}
            \int_0^\infty \varphi(1/t)/t\diff t = \int_1^2 \varphi(1/t)/t\diff t=1.
        \end{equation*}
        Define 
        \begin{equation*}
            k(x, y, t) = \int_{\mathbb{R}^n} e^{i2\pi (x-y) \cdot \xi} \tilde{p}(x, \xi) \varphi(\langle\xi\rangle/t)\diff t,
        \end{equation*}
        so that 
        \begin{equation*}
            k(x, y) = \int_0^\infty k(x, y, t) \diff t = \int_1^\infty k(x, y, t) \diff t.
        \end{equation*}
        For $0<\gamma\leq1$, we have that 
        \begin{equation*}
            \int_{A_j(z, \sigma^\gamma)} |k(x, y, t) - k(x, z, t)| \diff x \leq
        \end{equation*}
        \begin{equation*}
             \left[ \int_{\mathbb{T}^n} \left(1 + t^{2\rho} |x-z|^2\right)^N 
            |k(x, y, t) - k(x, z, t)|^2 \diff x
            \right]^{1/2}  \left[ \int_{A_j(z, \sigma^\gamma)} \left(1 + t^{2\rho} |x-z|^2\right)^{-N} \diff x  \right]^{1/2},
        \end{equation*}
        where $N>n/2$ is a natural number to be determined. In \cite[Theorem~3.4]{Cardona:Martinez} it is proved that the left hand side is dominated by 
        \begin{equation*}
            C\sigma t \cdot t^{\rho n/2} \quad \text{if} \quad \sigma t\leq 1. 
        \end{equation*}
        From here comes the order restriction. To estimate the second factor, let us define 
        \begin{equation*}
            F(r) = \left[ \int_r^{2r} (1+s^2)^{-N} s^{n-1} \diff s  \right]^{1/2}, \quad 0<r<\infty. 
        \end{equation*}
         Note that $F$ is a smooth function, such that $F(r) \sim r^{n/2}$ as $r\rightarrow 0$ and $F(r)\sim r^{n/2-N}$ as $r \rightarrow \infty$. Hence, we obtain that
        \begin{equation*}
            \left[ \int_{A_j(z, \sigma^\gamma)} \left(1 + t^{2\rho} |x-z|^2\right)^{-N}  \right]^{1/2}
        \end{equation*}
        \begin{equation*}
            \leq \left[ \int_{A_j(z, \sigma^\gamma)} \left(1 + t^{2\rho} |x-z|^2\right)^{-N} (t^\rho|x-z|)^{n-1} (t^\rho 2^j \sigma^\gamma)^{1-n} \diff x  \right]^{1/2}
        \end{equation*}
        \begin{equation*}
            \leq C t^{-\rho n/2} F(t^\rho2^j\sigma^\gamma).
        \end{equation*}
        Thus, we have that 
        \begin{equation*}
            \int_{A_j(z, \sigma^\gamma)} |k(x, y, t) - k(x, z, t)| \diff x \leq Ct\sigma F(t^\rho 2^j\sigma^\gamma), \quad t\sigma \leq 1.
        \end{equation*}
        Now, let us consider the case $t\sigma>1$. The computation made in \cite[Theorem~3.4]{Cardona:Martinez}, shows that 
        \begin{equation*}
            \int_{A_j(z, \sigma^\gamma)} |k(x, y, t)| + |k(x, z, t)| \diff x \leq C (t^\rho 2^j \sigma^\gamma )^{n/2 - N}.
        \end{equation*} 
        Combining, the last two estimates we obtain that 
        \begin{equation*}
            I_{j}(y, z, t) := \int_{A_j(z, \sigma^\gamma)} |k(x, y, t)-k(x, z, t)| \diff x 
        \end{equation*}
        \begin{equation}
            \leq 
            C \left[  \int_1^{1/\sigma} t\sigma F(t^\rho 2^j \sigma^\gamma )/t\diff t + \int_{1/\sigma}^\infty (t^\rho 2^j \sigma^\gamma )^{n/2 - N}/t \diff t \right].
            \label{eq:kernel-estimates-sum}
        \end{equation}
        Now, we choose $N$ so that $\rho(N-n/2)>1$, which implies that $\smallint F(t^\rho)\diff t < \infty$. Moreover, from \cref{eq:kernel-estimates-sum} we obtain that the integral $I_j(y, z, t)$ can be estimated by
        \begin{equation*}
            C\left(  2^{-j/\rho} \sigma^{1-\gamma/\rho} + 2^{j(n/2 -N)} \sigma^{(1-\gamma/\rho)\rho(N-n/2)} 
            \right)  \leq C2^{-j/\rho} \sigma^{1-\gamma/\rho}, \quad 0<\sigma<1.
        \end{equation*}
        Thus, completing the proof of this case. 

        \item[c)] We can use the same method as in the previous case to estimate \cref{eq:kernel-estimate-c} when $t\sigma\leq1$. Moreover, by inspecting the proof of \cite[Theorem~3.4]{Cardona:Martinez} we can see that we can estimate it as follows 
        \begin{equation*}
            \int_{A_j(z, \sigma^\gamma)} |k(y, x) - k(z, x)|\diff x  
        \end{equation*}
        \begin{equation*}
            \leq C \left[  \int_1^{1/\sigma} t\sigma F(t^\rho 2^j \sigma^\gamma )/t\diff t + \int_{1/\sigma}^\infty (t^\rho2^j\sigma^\gamma)^{n/2 - N} \diff t   \right].
        \end{equation*}        
    \end{itemize}
    Obtaining the desired result.
\end{proof}
We proceed to use these estimates and \cref{theo:operator-kernel-2j} to obtain the $H^p$-$L^p$ boundedness for toroidal pseudo-differential operators. 
\begin{theorem}
    Let $T\in \Psi^m_{\rho,\delta}(\mathbb{T}^n\times\mathbb{Z}^n)$, $0<\rho\leq1$, $0\leq\delta<1$. Assume that 
    \begin{equation*}
        m\leq-\beta-n\lambda \quad  \text{for some} \quad (1-\rho)\frac{n}{2}\leq\beta< \frac{n}{2}.
    \end{equation*}
    Then, the operator $T$ is a continuous mapping from $H^p(\mathbb{T}^n)$ into $L^p(\mathbb{T}^n)$ for $1 \geq p \geq p_0$ when $\rho<1$, where 
    \begin{equation}
        \frac{1}{p_0} = \frac{1}{2} + \frac{\beta(1/\rho + n/2)}{n(1/\rho-1+\beta)},
    \end{equation}
    and for $1\geq p > p_0=n/(n+1)$ when $\rho=1$.
    \label{theo:Hp-Lp}
\end{theorem}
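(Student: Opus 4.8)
The plan is to deduce this theorem directly from the abstract criterion \cref{theo:operator-kernel-2j}, applied with $X=Y=\mathbb{C}$, $\omega=1$ and $\alpha=\rho$, feeding it the kernel estimates recorded in \cref{theo:pdo-kernel}. With these identifications one has $\omega/\alpha=1/\rho$, so the critical index produced by \cref{theo:operator-kernel-2j},
\begin{equation*}
\frac1{p_0}=\frac12+\frac{\beta(\omega/\alpha+n/2)}{n(\omega/\alpha-\omega+\beta)},
\end{equation*}
is exactly \cref{eq:p0}, and $n/(n+\omega)=n/(n+1)$ when $\rho=1$; the admissible range $(1-\alpha)n/2\le\beta<n/2$ becomes $(1-\rho)n/2\le\beta<n/2$, matching the hypothesis. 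Thus it suffices to verify the three assumptions of \cref{theo:operator-kernel-2j} for $T$.

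First, the kernel estimates. The bound required for $\sigma\ge1$, namely $\int_{A_j(z,\sigma)}|k(x,y)-k(x,z)|\diff x\le C2^{-j}$ whenever $|y-z|<\sigma$, is precisely the second inequality of \cref{theo:pdo-kernel}~a) with $\varepsilon=1$, and needs no restriction on $m$. For $\sigma<1$ one must supply, for every $0<\gamma\le\alpha=\rho$, the bound $\int_{A_j(z,\sigma^\gamma)}|k(x,y)-k(x,z)|\diff x\le C2^{-j/\rho}\sigma^{1-\gamma/\rho}$; this is \cref{theo:pdo-kernel}~b) (stated there for all $0<\gamma\le1$), which is available because the hypothesis $m\le-\beta-n\lambda$ together with $\beta\ge(1-\rho)n/2$ forces $m\le-n[(1-\rho)/2+\lambda]$, the order threshold of that statement.

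Second, the two $L^2$-type mapping properties. Since $\beta\ge0$ we have $m\le-n\lambda$, so $T\in\Psi^{-n\lambda}_{\rho,\delta}(\mathbb{T}^n\times\mathbb{Z}^n)$ with $\delta<1$, hence $T$ is bounded on $L^2(\mathbb{T}^n)$ by the (toroidal) H\"ormander $L^2$-boundedness theorem already used in \cite{Cardona:Martinez}. For the $L^q$-$L^2$ estimate with $1/q=1/2+\beta/n$ and $(1-\rho)n/2\le\beta<n/2$ (so $q\in(1,2)$), I would write $T=(TJ^\beta)\circ J^{-\beta}$. Because $\langle\xi\rangle^\beta$ is independent of $x$, the composition $TJ^\beta$ has symbol $p(x,\xi)\langle\xi\rangle^\beta\in S^{-n\lambda}_{\rho,\delta}(\mathbb{T}^n\times\mathbb{Z}^n)$, so $TJ^\beta$ is again $L^2$-bounded by the same theorem; and $J^{-\beta}$, the Bessel potential of order $-\beta$, maps $L^q(\mathbb{T}^n)$ into $L^2(\mathbb{T}^n)$ by the Sobolev embedding $W^{\beta,q}(\mathbb{T}^n)\hookrightarrow L^2(\mathbb{T}^n)$, which holds exactly when $1/q-\beta/n=1/2$. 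Composing gives $T\colon L^q(\mathbb{T}^n)\to L^2(\mathbb{T}^n)$.

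Having checked the hypotheses, \cref{theo:operator-kernel-2j} yields the continuity of $T$ from $H^p(\mathbb{T}^n)$ into $L^p(\mathbb{T}^n)$ on the asserted range of $p$, completing the proof. The argument is essentially assembly; the only point requiring care is the bookkeeping around the single exponent $\beta$: the order hypothesis $m\le-\beta-n\lambda$ must be read simultaneously as ``$m\le-n[(1-\rho)/2+\lambda]$'' to invoke \cref{theo:pdo-kernel}~b) and as ``$m+\beta\le-n\lambda$'' to make the $J^\beta$-twisted operator $L^2$-bounded, while the same $\beta$ is what appears in \cref{eq:p0} through $1/q=1/2+\beta/n$. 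Everything else is a direct citation of the results proved above and in \cite{Cardona:Martinez}.
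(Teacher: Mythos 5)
Your proposal is correct and follows essentially the same route as the paper: apply \cref{theo:operator-kernel-2j} with $\omega=1$, $\alpha=\rho$, supply the kernel estimates from \cref{theo:pdo-kernel}, quote the $L^2$-boundedness from \cite[Theorem~3.6]{Cardona:Martinez}, and obtain the $L^q$-$L^2$ bound by factoring through the Bessel potential. In fact your factorization $T=(TJ^\beta)\circ J^{-\beta}$, giving $\|Tf\|_2\leq C\|J^{-\beta}f\|_2\leq C\|f\|_q$, is the correct ordering: the chain displayed in \cref{eq:q-2-boundedness} of the paper, $\|Tf\|_q=\|J^{-\beta}(J^\beta T)f\|_q\leq C\|f\|_2$, establishes $L^2\to L^q$ rather than the $L^q\to L^2$ boundedness that \cref{theo:operator-kernel-2j} requires, so your version repairs that slip (and, since $\langle\xi\rangle^\beta$ is $x$-independent, $TJ^\beta=\mathrm{Op}(p(x,\xi)\langle\xi\rangle^\beta)$ exactly, with no appeal to the composition calculus).
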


\begin{proof}
    Notice that by \cref{theo:pdo-kernel}, the operator $T$ satisfies the kernel estimates with $\alpha=\rho$ and $\omega=1$. The $L^2$-boundedness was proved in \cite[Theorem~3.6]{Cardona:Martinez} since $m\leq -n\lambda$. Thus, the only requirement from \cref{theo:operator-kernel-2j} that we need to prove is the $L^q$-$L^2$-boundedness. Notice that $J^{-\beta}$ is bounded from $L^q(\mathbb{T}^n)$ into $L^2(\mathbb{T}^n)$ by the Hardy-Littlewood-Sobolev inequality. Hence, since $J^\beta T$ has order $m+\beta \leq -n\lambda$, we obtain that 
    \begin{equation}
    \|Tf\|_q =    \|J^{-\beta}(J^\beta T)f\|_q \leq C \|(J^\beta T)f\|_2 \leq C\|f\|_2.
    \label{eq:q-2-boundedness}
    \end{equation}
    Thus, completing the requirements to obtain the desired result from \cref{theo:operator-kernel-2j}.
\end{proof}
\begin{remark}
    Notice that for $p=1$ and $\beta=n(1-\rho)/2$ we obtain the $H^1$-$L^1$ continuity proved in \cite[Theorem~3.10]{Cardona:Martinez}
\end{remark}
\subsection{Continuity of pseudo-differential operators on $H^p$}
\label{section:Hp}

We will prove that the image of a $(p, 2)$-atom is a suitable molecule, see \cref{def:molecule}. Therefore, in order to make sense of the cancellation property of a molecule, we prove the following lemma.

\begin{lemma}
    Any $(p,\theta,\mu)$-molecule $M:=M(x)$ related to a ball $B(z, \sigma)$ is an absolutely integrable function.
\end{lemma}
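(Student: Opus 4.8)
The plan is to show that a $(p,\theta,\mu)$-molecule $M$ lies in $L^1(\mathbb{T}^n;Y)$ by splitting the torus into the ball $B(z,\sigma)$ and its complement, and estimating each piece using the Cauchy--Schwarz inequality together with the $L^2$-type bounds $(\mathbf{M_1})$/$(\mathbf{M_1'})$ and the weighted bounds $(\mathbf{M_2})$/$(\mathbf{M_2'})$ that are part of \cref{def:molecule}. Concretely, I would write
\begin{equation*}
\int_{\mathbb{T}^n}\|M(x)\|_Y\diff x = \int_{B(z,\sigma)}\|M(x)\|_Y\diff x + \int_{\mathbb{T}^n\setminus B(z,\sigma)}\|M(x)\|_Y\diff x,
\end{equation*}
and handle the two terms separately, treating the cases $\sigma\geq 1$ and $\sigma<1$ in parallel since the molecule axioms are stated case-wise.

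For the near part, over $B(z,\sigma)$, I would apply Cauchy--Schwarz with the constant weight: $\int_{B(z,\sigma)}\|M(x)\|_Y\diff x \leq |B(z,\sigma)|^{1/2}\big(\int\|M(x)\|_Y^2\diff x\big)^{1/2}$, and then insert $(\mathbf{M_1})$ (resp.\ $(\mathbf{M_1'})$). Since $|B(z,\sigma)|\sim\sigma^n$, in the case $\sigma\geq1$ this gives a bound of the shape $\sigma^{n/2}\cdot\sigma^{n(1-2/p)/2}=\sigma^{n(1-1/p)}$, which is finite; in the case $\sigma<1$ one gets $\sigma^{n/2}\cdot\sigma^{n(1/q-2/p)/2}$, again a finite quantity. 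For the far part, over the complement of $B(z,\sigma)$, I would use the weighted Cauchy--Schwarz trick: write $\|M(x)\|_Y = \|M(x)\|_Y|x-z|^{\mu/2}\cdot|x-z|^{-\mu/2}$ and apply Cauchy--Schwarz to get
\begin{equation*}
\int_{\mathbb{T}^n\setminus B(z,\sigma)}\|M(x)\|_Y\diff x \leq \left(\int\|M(x)\|_Y^2|x-z|^\mu\diff x\right)^{1/2}\left(\int_{\mathbb{T}^n\setminus B(z,\sigma)}|x-z|^{-\mu}\diff x\right)^{1/2}.
\end{equation*}
The first factor is controlled by $(\mathbf{M_2})$ (resp.\ $(\mathbf{M_2'})$). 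The second factor is finite precisely because $\mu < n + (2\omega/\alpha)$ is not the relevant constraint here—rather what matters is that on the torus $|x-z|\lesssim\sqrt n/2$, so the tail integral $\int_{|x-z|>\sigma}|x-z|^{-\mu}\diff x$ converges at infinity trivially (bounded domain) and the only possible divergence is near $x=z$, which is excluded; hence it is bounded by $C\sigma^{n-\mu}$ when $\mu>n$ and by an absolute constant when $\mu\leq n$. Combining, the far part is finite for any admissible $\mu$ in the stated ranges.

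The main point to be careful about—and the only real obstacle—is bookkeeping the exponents of $\sigma$ and making sure no negative power of $\sigma$ blows up as $\sigma\to 0$ in the case $\sigma<1$; this is why the constraints $2n/p-n<\mu$ and the specific value $1/q=1/2+\beta/n$ matter, since they force $n(1/q-2/p)$ and $\theta\mu+n(1/q-2/p)$ to combine with the volume and tail exponents to give a nonnegative net power of $\sigma$ (or at worst a harmless bounded quantity, recalling the torus has finite diameter so one never actually integrates $|x-z|^{-\mu}$ to infinity). I would verify that in each case the resulting exponent of $\sigma$ is such that the bound stays finite for all $\sigma>0$, using $2n/p-n<\mu$ to control the near/far balance. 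Since the torus is compact this is essentially automatic, so no delicate estimate is needed; the lemma is really just an integrability bookkeeping statement that licenses speaking of $\int M(x)\diff x=0$ in \cref{def:molecule}.
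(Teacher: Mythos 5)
Your proposal is correct and follows essentially the same route as the paper's proof: splitting $\mathbb{T}^n$ into $B(z,\sigma)$ and its complement, applying Cauchy--Schwarz with $(\mathbf{M_1})$/$(\mathbf{M_1'})$ on the near part and the weighted Cauchy--Schwarz with $|x-z|^{\pm\mu/2}$ together with $(\mathbf{M_2})$/$(\mathbf{M_2'})$ on the far part, in the two cases $\sigma\geq1$ and $\sigma<1$. The exponent bookkeeping you describe matches the paper's computation, so nothing further is needed.
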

\begin{proof}
    First, let us assume that $\sigma\geq1$. Then, using H\"older's inequality and ($\mathbf{M}_1$) we have that
    \begin{align*}
        \int_{B(z,\sigma)}\|M(x)\|_Y \diff x \leq &  \|M(x)\|_{L^2(\mathbb{T}^n;Y)} \|\chi_{B(z,\sigma)}\|_2 \\
        \leq & C\sigma^{n(1/2-1/p)}\cdot \sigma^{n/2} \leq C|B|^{1-1/p}.
    \end{align*}
    On the other hand, by H\"older's inequality and ($\mathbf{M}_2$)
    \begin{align*}
        \int_{\mathbb{T}^n\setminus B(z,\sigma)}\|M(x)\|_Y\diff x \leq &  \left\|M(x)|x-z|^{\mu/2} \right\|_{L^2(\mathbb{T}^n;Y)} \left\| |x-z|^{-\mu/2} \chi_{\mathbb{T}^n\setminus B(z,\sigma)}(x) \right\|_2 \\
        \leq & C \sigma^{\mu/2 + n(1/2-1/p)} \cdot\sigma^{(n-\mu)/2} \\
        \leq & C \sigma^{n(1-1/p)} \leq C |B|^{1-1/p}.
    \end{align*}
    Hence, $M(x) \in L^1$ when $\sigma \geq 1$. Now, let us assume that $\sigma < 1$. Using ($\mathbf{M}_1'$) and H\"older's inequality we obtain that
    \begin{align*}
        \int_{B(z,\sigma)}\|M(x)\|_Y \diff x \leq & 
        \|M(x)\|_{L^2(\mathbb{T}^n;Y)}\|\chi_{B(z,\sigma)}\|_2 \\
        \leq & C \sigma^{n(1/q - 1/p)} \cdot \sigma^{n/2}\\ \leq & C|B|^{1/q-1/p+1/2} = C|B|^{\beta/n+1-1/p},
    \end{align*}
    where we used the fact that $1/q=1/2+\beta/n$. Using ($\mathbf{M}_2'$) we get that
    \begin{align*}
        \int_{\mathbb{T}^n\setminus B(z,\sigma)}\|M(x)\|_Y\diff x \leq &  \left\|M(x)|x-z|^{\mu/2} \right\|_{L^2(\mathbb{T}^n;Y)} \left\| |x-z|^{-\mu/2} \chi_{\mathbb{T}^n\setminus B(z,\sigma)}(x) \right\|_2 \\
        \leq & C\sigma^{\theta\mu/2+n(1/q-1/p)} \cdot\sigma^{(n-\mu)/2} \\
        = & C|B|^{\beta/n+1-1/p-(1-\theta)\mu/2n}.
    \end{align*}
    Thus, $M(x) \in L^1$ also when $\sigma<1$, completing the proof.
\end{proof}
Now, we prove that the $H^p$-norm of a molecule only depends on the constants related to it, see \cref{def:molecule}. This will be helpful when proving that the $H^p$-norm of the image of a $(p, 2)$-atom under a certain operator is uniform.
\begin{lemma}
    Let $M:=M(x)$ be a $(p, \theta, \mu)$-molecule related to $B(z, \sigma)$. Then, 
    \begin{equation*}
        M(x)=\sum_{j=0}^{N_\sigma} \lambda_j a_j,
    \end{equation*}
    where $a_j$ is a $H^{p,2}(\mathbb{T}^n;Y)$-atom supported on $B(z, 2^{j+1}\sigma)$. Moreover $\|M\|_{H^p(\mathbb{T}^n; Y)}$ only depends on the constants $C$ in \cref{def:molecule}.
    \label{lem:M-in-Hp}
\end{lemma}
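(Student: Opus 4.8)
The plan is to adapt to the torus the classical passage from a molecule to an atomic decomposition (in the spirit of Taibleson--Weiss and Coifman), exploiting that on $\mathbb{T}^n$ the annular decomposition around $z$ terminates after $N_\sigma$ steps by \cref{rem:N-sigma}. By the previous lemma $M\in L^1(\mathbb{T}^n;Y)$, so the cancellation $\smallint M=0$ and the partial integrals below make sense. Put $B_j:=B(z,2^{j+1}\sigma)$ for $0\le j\le N_\sigma$, so that $B_0\subset B_1\subset\cdots\subset B_{N_\sigma}=\mathbb{T}^n$ (the last identity because $2^{N_\sigma}\sigma>\sqrt n/2$), and decompose $\mathbb{T}^n$ into $B_0$ together with the annuli $A_j(z,\sigma)$, $1\le j\le N_\sigma$. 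Set $M_0:=M\chi_{B_0}$, $M_j:=M\chi_{A_j(z,\sigma)}$, and, with $\mathfrak m_j:=\int_{B_j}M\diff x$ (and $\mathfrak m_{-1}:=0$, so that $\mathfrak m_{N_\sigma}=0$), define
$$\lambda_ja_j:=M_j-\frac{\mathfrak m_j}{|B_j|}\chi_{B_j}+\frac{\mathfrak m_{j-1}}{|B_{j-1}|}\chi_{B_{j-1}},\qquad 0\le j\le N_\sigma$$
(the last term absent for $j=0$).

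A short computation then gives $\int a_j=0$, $\sum_{j=0}^{N_\sigma}\lambda_ja_j=M$ (the correction terms telescope and the top one vanishes since $\mathfrak m_{N_\sigma}=0$), and each $\lambda_ja_j$ is supported in $B_j=B(z,2^{j+1}\sigma)$. In the range of $p$ for which \cref{def:molecule} is used one has $n(1/p-1)<1$, so the only moment an $H^{p,2}(\mathbb{T}^n;Y)$-atom must annihilate is the zeroth one; hence it remains only to normalise the $L^2$-size of the $\lambda_ja_j$.

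For the $L^2$-estimates one separates the two regimes. When $\sigma\ge1$ one uses ($\mathbf{M_1}$)--($\mathbf{M_2}$): on $A_j(z,\sigma)$ one has $|x-z|\gtrsim 2^j\sigma$, so ($\mathbf{M_2}$) controls $\|M_j\|_{L^2}$, and writing $\mathfrak m_j=-\int_{\mathbb{T}^n\setminus B_j}M$ and applying Cauchy--Schwarz against the weight $|x-z|^{\pm\mu/2}$ together with ($\mathbf{M_2}$) and $\mu>2n/p-n\ge n$ controls $|\mathfrak m_j|\,|B_j|^{-1/2}$ by the same quantity; all powers of $\sigma$ cancel against $|B_j|^{1/2-1/p}$ and one gets $\|\lambda_ja_j\|_{L^2}\le C2^{-j\eta}|B_j|^{1/2-1/p}$ with $\eta=\tfrac12\big(\mu-(2n/p-n)\big)>0$, so taking $\lambda_j:=C2^{-j\eta}$ makes $a_j$ an $H^{p,2}$-atom on $B_j$ and $\sum_{j=0}^{N_\sigma}|\lambda_j|^p\le C^p\sum_{j\ge0}2^{-j\eta p}<\infty$. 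When $\sigma<1$ one argues similarly with ($\mathbf{M_1'}$)--($\mathbf{M_2'}$): one uses ($\mathbf{M_1'}$) for the scales $j$ below a crossover index $j_0\sim(1-\theta)\log_2(1/\sigma)$ and ($\mathbf{M_2'}$)---where the gain comes from the weight $|x-z|^\mu$ with $\mu>2n/p-n$---above it, and the restriction $\mu<2\beta/(1-\theta)$, the formula for $\theta$, and the identity $1/q=1/2+\beta/n$ are exactly what make the residual powers of $\sigma$, once summed over the \emph{finitely many} scales $j\le N_\sigma$, collapse to $\sum_{j=0}^{N_\sigma}|\lambda_j|^p\le C$ with $C$ independent of $\sigma$ and $z$.

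Combining the two regimes, $M=\sum_{j=0}^{N_\sigma}\lambda_ja_j$ is an atomic decomposition with $\big(\sum_j|\lambda_j|^p\big)^{1/p}\le C$, whence $\|M\|_{H^p(\mathbb{T}^n;Y)}\le C$ with $C$ depending only on the constants in \cref{def:molecule}. The main obstacle is the $L^2$-bookkeeping for $\sigma<1$: one must balance ($\mathbf{M_1'}$) and ($\mathbf{M_2'}$) so that, after summing the $\sigma$-dependent coefficients over the finite family of scales, the bound is uniform in $\sigma$; this is where the numerology of \cref{def:molecule} (the value of $\theta$, the exponent $q$, and the admissible range of $\mu$) is genuinely used, and it is the step requiring the most care.
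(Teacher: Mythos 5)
Your proposal is correct and follows essentially the same route as the paper: both decompose $M$ over the annuli $A_j(z,\sigma)$, restore the zeroth moment of each piece by telescoping the partial integrals $\mathfrak{m}_j=\int_{B_j}M=-\int_{\mathbb{T}^n\setminus B_j}M$ (the paper's $-\nu_j$), use $\int M=0$ together with the finiteness $j\le N_\sigma$ on the torus to kill the boundary term, and bound the coefficients by $C2^{-j[\mu/2+n(1/2-1/p)]}$ via $(\mathbf{M_2})$, resp.\ $(\mathbf{M_2'})$. The only differences are cosmetic: the paper spreads the correction terms over annuli (its $\phi_j$'s) while you spread them over the balls $B_j$, and for $\sigma<1$ the paper applies $(\mathbf{M_2'})$ at every scale and simply observes that the residual exponent $(\theta-1)\mu/2+\beta$ of $\sigma$ is nonnegative exactly because $\mu\le 2\beta/(1-\theta)$, rather than splitting the scales at a crossover index.
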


\begin{proof}
    Let $B_j = B(z, 2^{j+1}\sigma)$ and let $M_j$ be the mean value of $M(x)$ in $A_j(z, \sigma)$. Notice that $A_j = B_j - B_{j-1}$. Let us define 
    \begin{equation*}
        \psi_j(x) = [M(x)-M_j]\chi_{A_j}(x),
    \end{equation*}
    which is supported on $B_j$ and has mean value zero. Moreover, 
    \begin{equation*}
        \|\psi_j\|_{L^2(\mathbb{T}^n;Y)}^2 \leq 2^2 \int_{A_j(z,\sigma)}\|M(x)\|_Y^2 \diff x.
    \end{equation*}
    Now, let us assume $\sigma\geq1$ and use ($\mathbf{M}_2$) to obtain
    \begin{align*}
        \|\psi_j\|_{L^2(\mathbb{T}^n;Y)}^2 \leq & C \int_{A_j(z,\sigma)}\|M(x)\|_Y^2|x-z|^{\mu} |x-z|^{-\mu} \diff x
        \\
        \leq & C\sigma^{\mu+n(1-2/p)}\cdot2^{-j\mu} \sigma^{-\mu}
        \\
        \leq & C 2^{-j[\mu + n(1-2/p)]} |B_j|^{1-2/p}.
    \end{align*}
    Hence, we have that 
    \begin{equation}
        \|\psi_j\|_{L^2(\mathbb{T}^n;Y)} \leq C 2^{-j[\mu/2 + n(1/2-1/p)]} |B_j|^{1/2-1/p}.
        \label{eq:psi-j}
    \end{equation}
    On the other hand, we have that
    \begin{equation*}
        M(x)\chi_{B_m}(x) - \sum_{j=0}^m\psi_j(x) = \sum_{j=0}^mM_j\chi_{B_j}(x),
    \end{equation*}
    where the left hand side converges in the $L^2$-norm to $M(x) -\sum_{j=0}^\infty\psi_j(x)$. To estimate the left-hand side, we define a sequence $\{\nu_j\}$, $j=-1, 0, 1, 2,...,N_\sigma$ by
    \begin{equation*}
        \nu_{-1} = \int M(x)\diff x = 0, \quad \nu_j = \int_{\mathbb{T}^n\setminus B_j} M(x)\diff x.
    \end{equation*}
    Then we have that
    \begin{equation*}
        \sum_{j=0}^{N_\sigma}M_j \chi_{A_j} =\sum_{j=0}^{N_\sigma}(\nu_{j-1}-\nu_j)|A_j|^{-1}\chi_{A_j} = \sum_{j=0}^{N_\sigma-1}\phi_j - \nu_{N_\sigma}|A_{N_\sigma}|^{-1}\chi_{A_{N_\sigma}},
    \end{equation*}
    where $\phi_j=\nu_j\left( |A_{j+1}|^{-1}\chi_{A_{j+1}} - |A_{j}|^{-1}\chi_{A_{j}}  \right)$. We can notice that each $\phi_j$ is supported on $B_{j+1}$ and has mean value zero. Furthermore, we can use the method used for $\psi_j$ to obtain
    \begin{equation}
        \|\phi_j\|_{L^2(\mathbb{T}^n;Y)} \leq C 2^{-j[\mu/2 + n(1/2-1/p)]} |B_{j+1}|^{1/2-1/p}.
        \label{eq:phi-j}
    \end{equation}
    Now, we have that $\nu_{N_\sigma}=0$ thanks to the fact that $B_{N_\sigma}$ contains the torus, see \cref{rem:N-sigma}. Hence we have that 
    \begin{equation}
        M(x)= \psi_0(x)+ \sum_{j=1}^{N_\sigma} \left[\psi_j(x) + \phi_{j-1}(x) \right],
        \label{eq:psi-phi}
    \end{equation}
    where each term can be rewritten as a $H^{p,2}(\mathbb{T}^n; Y) $-atom by \cref{eq:psi-j} and \cref{eq:phi-j}. Now, let us assume $\sigma<1$ and use ($\mathbf{M}_2'$) to obtain
    \begin{align*}
        \|\psi_j\|_{L^2(\mathbb{T}^n;Y)}^2 \leq & C \int_{A_j(z,\sigma)}\|M(x)\|_Y^2|x-z|^{\mu} |x-z|^{-\mu} \diff x
        \\
        \leq & C\sigma^{\theta\mu+2n(1/q-1/p)}\cdot2^{-j\mu} \sigma^{-\mu}
        \\
        \leq & C 2^{-j[\mu + 2n(1/2-1/p)]} \sigma^{\mu(\theta-1) + 2n(1/q-1/2)} |B_j|^{2(1/2-1/p)}.
    \end{align*}
    Hence, we have that 
    \begin{align*}
        \|\psi_j\|_{L^2(\mathbb{T}^n;Y)} \leq C 2^{-j[\mu/2 + n(1/2-1/p)]} \sigma^{\mu(\theta-1)/2+n(1/q-1/2)} |B_j|^{1/2-1/p}.
    \end{align*}
    
    The exponent of $\sigma$ becomes $\mu(\theta-1)/2 + \beta$, hence the restriction $\mu \leq 2\beta/(1-\theta)$. On the other hand, $\|\phi_j\|_2$ can be estimated in a similar way to obtain the expression \cref{eq:psi-phi} for $\sigma<1$. Now, we can consider the $H^p$-norm of $M$. In this case, we have that
    \begin{equation*}
        |\lambda_j|^p \leq C 2^{j[ n(1-p/2) -\mu p/2 ]},
    \end{equation*}
    Hence $\|M\|_{H^p(\mathbb{T}^n;Y)} \leq C$ whenever $\mu > np/2 -n$. Thus, completing the proof.
\end{proof}
Now, we prove an auxiliary result that relates the $p$-maximal function of Hardy-Littlewood and the sharp maximal operator.
\begin{theorem}
    Let $T$ be an operator satisfying the $D_{r,\alpha}$ condition such that for $1<p<q\leq\infty$ and $p/q\leq\alpha$ we have that
    \begin{equation}
        \left( \frac{1}{|B(z, \sigma)|} \int_{B(z, \sigma)} \|Tf(x)\|_Y^q \diff x  \right)^{1/q} \leq C \left( \frac{1}{|B(z, \sigma^\alpha)|} \int \|f(x)\|_X^p \diff x  \right)^{1/p}, 
        \label{eq:c-p-q-alpha}
    \end{equation}
    for every $0<\sigma<1$ and 
    \begin{equation*}
        \|Tf\|_{L^p(\mathbb{T}^n;Y)} \leq C \|f\|_{L^p(\mathbb{T}^n;X)},
    \end{equation*}
    when $\sigma\geq1$, for some absolute constant $C>0$. Then, for $s=\max \{ p, r' \}$, we have that 
    \begin{equation}
        (Tf)^\#(x) \leq CM_sf(x), \quad f\in L^\infty (\mathbb{T}^n; X).
        \label{eq:Tf-Msf}
    \end{equation}
    \label{theo:Tf-Msf}
\end{theorem}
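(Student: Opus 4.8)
The plan is to estimate, for every ball $B=B(x_0,\sigma)$ with $x\in B$, the oscillation $\inf_{c\in Y}\frac1{|B|}\int_B\|Tf(y)-c\|_Y\diff y$ by $CM_sf(x)$, and then take the supremum over all such balls; this is the Fefferman--Stein scheme, combined with a Calder\'on--Zygmund type splitting adapted to the exponent $\alpha$. The easy regime is $\sigma\ge1$: here $|B|$ is bounded below by a positive constant depending only on $n$, so taking $c=0$, H\"older's inequality and the global $L^p$ bound from the hypothesis give $\frac1{|B|}\int_B\|Tf(y)\|_Y\diff y\le|B|^{-1/p}\|Tf\|_{L^p(\mathbb{T}^n;Y)}\le C\|f\|_{L^p(\mathbb{T}^n;X)}\le C\|f\|_{L^s(\mathbb{T}^n;X)}\le CM_sf(x)$, where we used $s\ge p$ with $|\mathbb{T}^n|=1$, and $M_sf(x)\ge(\int_{\mathbb{T}^n}\|f\|_X^s)^{1/s}$ (the ball $\mathbb{T}^n$ is admissible).

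Now assume $\sigma<1$ and put $B^{\ast}=B(x_0,2\sigma^\alpha)$; since $\sigma<\sigma^\alpha$ we have $B\subset B^{\ast}$ and $x\in B^{\ast}$. Decompose $f=f_1+f_2$ with $f_1=f\chi_{B^{\ast}}$, $f_2=f\chi_{\mathbb{T}^n\setminus B^{\ast}}$, and take $c=Tf_2(x_0)$, which is meaningful because $k$ is continuous off the diagonal (part of the $D_{r,\alpha}$ condition) and $f_2$ vanishes near $\overline{B}$. Since $\|Tf(y)-c\|_Y\le\|Tf_1(y)\|_Y+\|Tf_2(y)-Tf_2(x_0)\|_Y$, the two averages are bounded separately. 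For the local term, using $p\le q$ (which follows from $p/q\le\alpha\le1$), the hypothesis \cref{eq:c-p-q-alpha}, and the doubling of balls on $\mathbb{T}^n$,
\begin{equation*}
\frac1{|B|}\int_B\|Tf_1(y)\|_Y\diff y\le\Bigl(\frac1{|B|}\int_B\|Tf_1(y)\|_Y^q\diff y\Bigr)^{1/q}\le C\Bigl(\frac1{|B^{\ast}|}\int_{B^{\ast}}\|f(w)\|_X^p\diff w\Bigr)^{1/p}\le CM_pf(x)\le CM_sf(x),
\end{equation*}
the last inequality using $s\ge p$ and $x\in B^{\ast}$.

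For the far term, write $Tf_2(y)-Tf_2(x_0)=\int_{\mathbb{T}^n\setminus B^{\ast}}(k(y,w)-k(x_0,w))f(w)\diff w$ and decompose $\mathbb{T}^n\setminus B^{\ast}=\bigcup_{j=1}^{N_{\sigma^\alpha}}A_j(x_0,\sigma^\alpha)$, a finite union by \cref{rem:N-sigma} along which $2^j\sigma^\alpha\lesssim1$. For $y\in B$ we have $|y-x_0|<\sigma$, so the $D_{r,\alpha}$ condition \emph{applied to $\tilde k(x,y)=k(y,x)$} — this is precisely where the symmetric requirement in \cref{def:D-r-alpha} is used, since here the kernel difference occurs in the first slot — bounds $\bigl(\int_{A_j(x_0,\sigma^\alpha)}\|k(y,w)-k(x_0,w)\|_{\mathcal B}^r\diff w\bigr)^{1/r}$ by $d_j|A_j(x_0,\sigma^\alpha)|^{-1/r'}$, uniformly in $y\in B$. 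H\"older's inequality with exponents $r,r'$ on each annulus then gives
\begin{equation*}
\int_{A_j(x_0,\sigma^\alpha)}\|k(y,w)-k(x_0,w)\|_{\mathcal B}\,\|f(w)\|_X\diff w\le d_j\Bigl(\frac1{|A_j(x_0,\sigma^\alpha)|}\int_{A_j(x_0,\sigma^\alpha)}\|f(w)\|_X^{r'}\diff w\Bigr)^{1/r'}\le Cd_jM_{r'}f(x),
\end{equation*}
because $A_j(x_0,\sigma^\alpha)$ is contained in the ball $B(x_0,2^{j+1}\sigma^\alpha)$ through $x$ of comparable volume. Summing over $j$ (using $\{d_j\}\in\ell^1$), these bounds being uniform in $y\in B$, and using $M_{r'}f\le M_sf$, the average over $B$ of the far term is also $\le CM_sf(x)$. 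Combining with the local estimate and taking the supremum over all $B\ni x$ yields \cref{eq:Tf-Msf}.

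The main obstacle is the scale mismatch: the splitting and the annular decomposition are performed at scale $\sigma^\alpha$, while the ball measuring the oscillation of $Tf$ has scale $\sigma$. One must therefore verify that the hypothesis $|y-z|<\sigma$ of \cref{def:D-r-alpha} holds for $y\in B$ (it does, since $\sigma<\sigma^\alpha$ when $\sigma<1$), that the annuli $A_j(x_0,\sigma^\alpha)$ exhaust $\mathbb{T}^n\setminus B^{\ast}$ in finitely many steps with $2^j\sigma^\alpha$ staying bounded, and that $|A_j(x_0,\sigma^\alpha)|$ is comparable to the volume of a ball centred near $x$ — all consequences of \cref{rem:N-sigma} and the geometry of the torus. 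The remaining care is bookkeeping: keeping the exponents $p$ and $r'$ straight and noting that both maximal functions are dominated by $M_s$ with $s=\max\{p,r'\}$; the degenerate case $r=\infty$ (so $r'=1$, $s=p$, since $p>1$) is covered by the same formulas.
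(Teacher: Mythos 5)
Your proposal is correct and follows essentially the same route as the paper: the splitting $f=f_1+f_2$ at scale $\sigma^\alpha$ with $c=Tf_2$ evaluated at the centre, H\"older plus the $D_{r,\alpha}$ condition (in its symmetric $\tilde k$ form) on each annulus to produce $d_jM_{r'}f$, and the hypothesis \cref{eq:c-p-q-alpha} for the local term to produce $M_pf$. The only deviation is cosmetic: for $\sigma\geq1$ you take $c=0$ and apply the global $L^p$ bound to all of $f$ (using that $\mathbb{T}^n$ itself is an admissible ball), whereas the paper keeps the splitting and applies it to $f_1$; both are fine.
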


\begin{proof}
    Let us fix a ball $B(z, \sigma)$ and let us write $f=f_1+f_2$, where 
    \begin{equation*}
        f_1 = f\chi_{B(z, 2\sigma^\alpha)}.
    \end{equation*}
    Let
    \begin{equation*}
        c = \int k(z, y)f_2(y)\diff y,
    \end{equation*}
    then we have that
    \begin{align*}
        \int_{B(z, \sigma)} \|Tf(x)-c\|_Y \diff x \leq & \int_{B(z, \sigma)} \|Tf_1(x)\|_Y \diff x \\
        & + \sum_{j=1}^{N_{\sigma^\alpha}} \int_{B(z, \sigma)}\int_{A_j(z, \sigma^\alpha)} \|k(x,y)-k(z,y)\|_\mathcal{B}\|f(y)\|_X \diff y \diff x .
    \end{align*}
    First, using H\"older's inequality and the $D_{r, \alpha}$ condition we obtain that for $j=1,2,...,N_{\sigma^\alpha}$
    \begin{equation*}
    \int_{A_j(z, \sigma^\alpha)} \|k(x,y)-k(z,y)\|_\mathcal{B}\|f(y)\|_X \diff y 
    \end{equation*}
    \begin{equation*}
        \leq\left( \int_{A_j(z, \sigma^\alpha)} \|k(x,y)-k(z,y)\|_\mathcal{B}^r \diff y \right)^{1/r}\left(  \int_{A_j(z,\sigma^\alpha)}  \|f(y)\|_X^{r'} \diff y \right)^{1/r'}
    \end{equation*}
    \begin{equation*}
        \leq  d_j |A_j(z,\sigma^\alpha)|^{-1/r'}\left(  \int_{A_j(z,\sigma^\alpha)}  \|f(y)\|_X^{r'} \diff y \right)^{1/r'}.
    \end{equation*}
    \begin{equation*}
        \leq d_j M_{r'}f(z).
    \end{equation*}
    Hence, we have that 
    \begin{equation*}
        \frac{1}{|B(z,\sigma)|}\sum_{j=1}^{N_{\sigma^\alpha}} \int_{B(z,\sigma)}\int_{A_j(z, \sigma^\alpha)} \|k(x,y)-k(z,y)\|_\mathcal{B}\|f(y)\|_X \diff y \diff x
    \end{equation*}
    \begin{equation*}
        \leq CM_{r'}f(z).
    \end{equation*}
    On the other hand, let us assume $\sigma<1$. Then, using H\"older's inequality and \cref{eq:c-p-q-alpha}, we get that
    \begin{equation*}
        \frac{1}{|B(z,\sigma)|}\int_{B(z, \sigma)}\|Tf_1(x)\|_Y \diff x 
    \end{equation*}
    \begin{equation*}
        \leq \left( 
        \frac{1}{|B(z,\sigma)|}\int_{B(z,\sigma)} \|Tf_1(x)\|^q_Y \diff x 
        \right)^{1/q} \left( \int_{B(z,\sigma)} \frac{\diff x}{|B(z,\sigma)|}  
        \right)^{1/q'}
    \end{equation*}
    \begin{equation*}
        \leq C \left( \frac{1}{|B(z, \sigma^\alpha)|} \int_{B(z,2\sigma^\alpha)} \|f(x)\|_X^p \diff x  \right)^{1/p} 
    \end{equation*}
    \begin{equation*}
        \leq C M_pf(z).
    \end{equation*}
    For the case $\sigma \geq 1$ we use H\"older's inequality and the $L^p$-boundedness of $T$ to obtain
    \begin{align*}
        \frac{1}{|B(z,\sigma)|}\int_{B(z, \sigma)}\|Tf_1(x)\|_Y \diff x  \leq & 
        |B(z,\sigma)|^{-1/p} \|Tf_1\|_{L^p(\mathbb{T}^n;Y)} \\
        \leq & C |B(z,\sigma)|^{-1/p} \|f_1\|_{L^p(\mathbb{T}^n;X)} \\
        \leq & C M_pf(z).
    \end{align*}
    Combining the estimates above we can conclude that 
    \begin{equation*}
        (Tf)^\#(z) \leq C M_sf(z),
    \end{equation*}
    finishing the proof.
\end{proof}
Moreover, when the operator satisfies the $D_{1,\alpha}$ condition we obtain the following corollary.
\begin{corollary}
    Let $T$ be an operator as in \cref{theo:Tf-Msf}, but that satisfies the $D_{1,\alpha}$ condition. Then $T$ is a continuous operator from $L^\infty(\mathbb{T}^n;X)$ into $\mathrm{BMO}(\mathbb{T}^n;Y)$.
    \label{cor:L-inf-BMO}
\end{corollary}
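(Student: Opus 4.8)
The plan is to specialize \cref{theo:Tf-Msf} to the case $r=1$ and simply read off the conclusion from the pointwise estimate it provides. Since the $D_{1,\alpha}$ condition is precisely the $D_{r,\alpha}$ condition of \cref{def:D-r-alpha} with $r=1$, and since by hypothesis $T$ satisfies the remaining requirements of \cref{theo:Tf-Msf} (the estimate \cref{eq:c-p-q-alpha} for $0<\sigma<1$ and $L^p$-boundedness for $\sigma\geq1$, for the admissible exponents $1<p<q\leq\infty$ with $p/q\leq\alpha$), that theorem applies with $r=1$. Then $r'=\infty$, so $s=\max\{p,r'\}=\infty$, and \cref{eq:Tf-Msf} gives
\begin{equation*}
    (Tf)^\#(x)\leq C\,M_\infty f(x),\qquad f\in L^\infty(\mathbb{T}^n;X).
\end{equation*}

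The second step is to observe that $M_\infty f$ collapses to the Bochner $L^\infty$-norm of $f$. For every $x\in\mathbb{T}^n$ and every ball $B\ni x$ one has $\bigl(\frac{1}{|B|}\int_B\|f(y)\|_X^s\,\diff y\bigr)^{1/s}\leq\|f\|_{L^\infty(\mathbb{T}^n;X)}$ for all finite $s$; letting $s\to\infty$ (equivalently, replacing the average by the essential supremum over $B$) yields $M_\infty f(x)\leq\|f\|_{L^\infty(\mathbb{T}^n;X)}$. Conversely, by \cref{rem:N-sigma} the whole torus is itself a ball, so choosing $B=\mathbb{T}^n$ in the supremum defining $M_\infty$ shows $M_\infty f(x)=\|f\|_{L^\infty(\mathbb{T}^n;X)}$ for a.e.\ $x$. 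Combining this with the previous display gives $\|(Tf)^\#\|_{L^\infty(\mathbb{T}^n;Y)}\leq C\|f\|_{L^\infty(\mathbb{T}^n;X)}$.

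Finally, recalling the equivalence noted right after the definition of the sharp maximal operator, namely that $\|g^\#\|_\infty$ is comparable to $\|g\|_{\mathrm{BMO}}$, we conclude
\begin{equation*}
    \|Tf\|_{\mathrm{BMO}(\mathbb{T}^n;Y)}\leq C\,\|(Tf)^\#\|_{L^\infty(\mathbb{T}^n;Y)}\leq C\,\|f\|_{L^\infty(\mathbb{T}^n;X)},
\end{equation*}
which is exactly the asserted continuity of $T$ from $L^\infty(\mathbb{T}^n;X)$ into $\mathrm{BMO}(\mathbb{T}^n;Y)$.

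There is essentially no analytic obstacle left at this stage, since the substantive work is carried out in \cref{theo:Tf-Msf}. The only points needing a line of care are the bookkeeping identification $s=\infty$ when $r=1$, and the remark that every averaged $L^s$-norm over a ball is dominated by the $L^\infty$-norm while the torus is itself a ball — so that $M_\infty$ reduces to $\|\cdot\|_{L^\infty}$ — which is where the finiteness of the ``dyadic'' annular decomposition on the compact torus (\cref{rem:N-sigma}) is implicitly used.
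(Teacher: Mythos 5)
Your proposal is correct and follows the paper's own argument essentially verbatim: specialize \cref{theo:Tf-Msf} to $r=1$ so that $s=\infty$, note that the resulting bound $(Tf)^\#(x)\leq C M_\infty f(x)$ collapses to $(Tf)^\#(x)\leq C\|f\|_{L^\infty(\mathbb{T}^n;X)}$, and invoke the equivalence of $\|\cdot^\#\|_\infty$ with the BMO norm. The only difference is that you spell out the identification $M_\infty f=\|f\|_{L^\infty}$, which the paper leaves implicit.
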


\begin{proof}
    Notice that $r=1$ implies that $s=\infty$ and the estimate in \cref{eq:Tf-Msf} becomes
    \begin{equation*}
        (Tf)^\#(x) \leq C\|f\|_{L^\infty(\mathbb{T}^n;X)},
    \end{equation*}
    proving the result.
\end{proof}
In the hypothesis of the following lemma we have the $T^*(\overline{e})$ condition, a vector valued counterpart of the $T(1)$ condition famously stated by David and Journ\'e, see \cite{david-journe}.
\begin{lemma}
    Let $X, Y$ be reflexive Banach spaces, let $T$ and  $T^*$ be operators with kernels satisfying the $D_\alpha$ condition. Also, assume that they can be extended to bounded operators from $L^2(\mathbb{T}^n;X)$ into $L^2(\mathbb{T}^n;Y)$ and from $L^q(\mathbb{T}^n;X)$ into $L^2(\mathbb{T}^n;Y)$ so that 
    \begin{equation}
        \frac{1}{q} = \frac{1}{2} + \frac{\beta}{n} \quad \text{ for some } \quad (1-\alpha)\frac{n}{2}\leq \beta<\frac{n}{2}.
    \end{equation}
    Moreover, suppose that $T^*(\overline{e})=0$ for every ${e}\in Y'$ and $\overline{e}:\mathbb{T}^n\rightarrow Y'$ given by $\overline{e}(x)=e$. Let $a:=a(x)$ be an $H^{p,2}(\mathbb{T}^n;X)$-atom supported on $B(z,\sigma)$. Then, $M(x):=Ta(x)$ is a $(p, \theta, \mu)$-molecule with constants $C$ depending only on $T$ and its continuity properties.
    \label{lem:Ta-is-molecule}
\end{lemma}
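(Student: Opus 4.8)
The plan is to verify directly each of the defining estimates of a $(p,\theta,\mu)$-molecule for the function $M=Ta$, splitting according to whether the radius $\sigma$ of the supporting ball of the atom is $\geq 1$ or $<1$, exactly as in the structure of \cref{def:molecule}. The size estimates $(\mathbf{M}_1)$/$(\mathbf{M}_1')$ come for free: since $a$ is an $H^{p,2}(\mathbb{T}^n;X)$-atom supported on $B(z,\sigma)$ we have $\|a\|_{L^2(\mathbb{T}^n;X)}\leq |B|^{1/2-1/p}$, so the $L^2$-$L^2$ boundedness of $T$ gives $\|M\|_{L^2(\mathbb{T}^n;Y)}\leq C|B|^{1/2-1/p}\sim C\sigma^{n(1/2-1/p)}$ when $\sigma\geq1$; when $\sigma<1$ we instead use the $L^q$-$L^2$ boundedness together with H\"older's inequality $\|a\|_{L^q}\leq |B|^{1/q-1/2}\|a\|_{L^2}\leq |B|^{1/q-1/p}$ to get the exponent $n(1/q-2/p)$ required by $(\mathbf{M}_1')$. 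The cancellation property $\int M(x)\diff x = 0$ is where the $T^*(\overline e)=0$ hypothesis enters: for each $e\in Y'$, $\langle \int M(x)\diff x, e\rangle = \int \langle Ta(x), e\rangle\diff x = \langle Ta, \overline e\rangle = \langle a, T^*\overline e\rangle = 0$, and since $Y$ is reflexive this forces $\int M(x)\diff x=0$ (one should note $M\in L^1$, which follows from the moment estimates once they are proved, or directly from the $L^2$ bound on a compact torus).

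The substance is in the weighted estimates $(\mathbf{M}_2)$/$(\mathbf{M}_2')$, i.e. controlling $\int \|M(x)\|_Y^2 |x-z|^\mu\diff x$. I would split the integral into the region $\{|x-z|\leq 2\sigma^\theta\}$ (replace $\sigma^\theta$ by $\sigma$ when $\sigma\geq1$, since then $\theta$ plays no role and one takes the natural "$\theta=1$" reading), where $|x-z|^\mu \lesssim \sigma^{\theta\mu}$ and one again invokes the $L^2$ or $L^q$-$L^2$ bound; and the far region, which one further decomposes into the dyadic annuli $A_j(z,\sigma^\theta)$ for $j=1,\dots,N_{\sigma^\theta}$. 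On each annulus $|x-z|\sim 2^j\sigma^\theta$, and using the cancellation $\int a=0$ one writes $Ta(x)=\int (k(x,y)-k(x,z))a(y)\diff y$ and estimates the $L^2(A_j)$-norm of this via the $D_\alpha$ kernel bound from \cref{rem:D-alpha-condition}: $\|k(x,y)-k(x,z)\|_{\mathcal B}\lesssim |y-z|^\omega |x-z|^{-n-\omega/\alpha}\lesssim \sigma^\omega (2^j\sigma^\theta)^{-n-\omega/\alpha}$ for $y\in B(z,\sigma)$ and $x\in A_j$ (the hypothesis $2|y-z|^\alpha\leq|x-z|$ holds because $|y-z|<\sigma$ and one checks $2\sigma^\alpha \leq 2^j\sigma^\theta$ using $\theta\leq\alpha$ and $\sigma<1$, so $\sigma^\theta\geq\sigma^\alpha$). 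Then $\|Ta\|_{L^2(A_j)} \lesssim \sigma^\omega (2^j\sigma^\theta)^{-n-\omega/\alpha}\,|A_j|^{1/2}\|a\|_{L^1}$, and $\|a\|_{L^1}\leq |B|^{1/2}\|a\|_{L^2}\leq |B|^{1-1/p}\sim\sigma^{n(1-1/p)}$; multiplying by the weight $(2^j\sigma^\theta)^\mu$ and summing the geometric series in $j$ — which converges precisely when $\mu < n+2\omega/\alpha$ — produces the claimed power $\sigma^{\theta\mu+n(1/q-2/p)}$ (respectively $\sigma^{\mu+n(1-2/p)}$ when $\sigma\geq1$). One must bookkeep carefully that the exponents of $\sigma$ coming from the near region and from each piece of the far region all collapse to the target exponent; this is where the precise definition of $\theta=\tfrac{n/2+\omega-\beta}{n/2+\omega/\alpha}$ and the relation $1/q=1/2+\beta/n$ are forced, and checking the inequality $2n/p-n<\mu$ for the lower endpoint (so that the near-region term is of the right order) together with $\mu<2\beta/(1-\theta)$ closes the admissible range.

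The same argument applied to $T^*$ with its $D_\alpha$ kernel (guaranteed symmetric by the last sentence of \cref{def:D-r-alpha}, which is inherited by $D_\alpha$) is not separately needed here, but the hypothesis that $T^*$ also satisfies $D_\alpha$ and the mapping bounds is what will let the next step (the $H^p$-boundedness theorem) run; for this lemma only the forward estimates on $k$ and the single identity $\langle a, T^*\overline e\rangle=0$ are used. The main obstacle is the second paragraph's bookkeeping: one has to carry the two regimes $\sigma\geq1$ and $\sigma<1$ in parallel, track how $\theta$ and $\beta$ enter the exponents, and verify that the geometric sums over the finitely many toroidal annuli converge under exactly the stated constraints on $\mu$ — the torus-specific finiteness (\cref{rem:N-sigma}) is a convenience, not an obstruction, but one should confirm no boundary term from $j=N_\sigma$ spoils the estimate (it does not, since the weight $|x-z|^\mu$ is bounded on $\mathbb{T}^n$). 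I expect the verification of $(\mathbf{M}_2')$ and the precise matching of the $\sigma$-exponent $\theta\mu+n(1/q-2/p)$ to be the delicate point; everything else is routine application of H\"older's inequality and the two continuity hypotheses on $T$.
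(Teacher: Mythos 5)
Your proposal follows essentially the same route as the paper's proof: the size estimates via the $L^2$ and $L^q$--$L^2$ bounds, the splitting of the weighted moment integral into a near ball plus dyadic annuli where the atom's cancellation and the $D_\alpha$ kernel bound give the decay $\sigma^{\omega}|x-z|^{-(n+\omega/\alpha)}$, and the vanishing of $\int M$ via the pairing $\langle a, T^*\overline{e}\rangle=0$. The only point treated more carefully in the paper is the last step, where the pairing is justified rigorously by normalizing $a$ to an $H^{1,2}$-atom and invoking \cref{cor:L-inf-BMO} so that $T^*\overline{e}$ lives in $\mathrm{BMO}(\mathbb{T}^n;X')$, the dual of $H^1(\mathbb{T}^n;X)$; your formal duality computation should be completed in this way.
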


\begin{proof}
    First, let us assume $\sigma \geq 1$. Then, by the $L^2$ boundedness we get that
    \begin{equation*}
        \int\|M(x)\|_Y^2\diff x = \|Ta\|_{L^2(\mathbb{T}^n;Y)}^2 \leq C \|a\|_{L^2(\mathbb{T}^n;Y)}^2 \leq C |B|^{2(1/2-1/p)} \leq C\sigma^{n(1-2/p)},
    \end{equation*}
    proving $(\mathbf{M_1})$. Moreover, we have that 
    \begin{align*}
        \int \|M(x)\|_Y^2|x-z|^\mu \diff x \leq &
        \int_{B(z, 2\sigma^\alpha)} \|M(x)\|_Y^2|x-z|^\mu \diff x\\
        & + \sum_{j=1}^{N_{\sigma^\alpha}} \int_{A_j(z, \sigma^\alpha)} \|M(x)\|_Y^2|x-z|^\mu \diff x \\
        = & I_1 + I_2.
    \end{align*}
    By $(\mathbf{M_1})$, we obtain that
    \begin{equation*}
        I_1 \leq C\int\|M(x)\|_Y^2 \sigma^{\alpha\mu} \diff x\leq  C \sigma^{\mu +n(1-2/p)}.
    \end{equation*}
    Moreover, assuming $x\in A_j(z, \sigma^\alpha)$, and by the cancellation property of $a$ we get that
    \begin{align*}
        \|M(x)\|_Y^2 \leq & \left[ \int_{B(z, \sigma)} \|k(x, y) - k(x,z)\|_\mathcal{B} \|a(y)\|_X \diff y \right]^2 \\
        \leq & C\frac{|y-z|^{2\omega}}{|x-z|^{2(n+\omega/\alpha)}} 
        \left[ \int_{B(z, \sigma)} \|a(y)\|_X \diff y \right]^2 \\
        \leq & C \frac{\sigma^{2\omega} \cdot \sigma^{2n(1-1/p)}}{|x-z|^{2(n+\omega/\alpha)}}.
    \end{align*}
    Now, since $\mu < n + 2\omega/\alpha$ we have that
    \begin{align*}
        I_2 \leq & \sum_{j=1}^{N_{\sigma^\alpha}} \sigma^{2\omega + 2n(1-1/p)} \cdot (2^j\sigma^\alpha)^{\mu - 2\omega/\alpha - n} \\
        \leq &  C \sigma^{\alpha\mu + 2n(1-1/p) - n\alpha} \\ 
        \leq & C \sigma^{\mu + 2n(1-1/p)},
    \end{align*}
    since $\mu > n$ and $\alpha \leq 1$, proving $(\mathbf{M_2})$. Now, let us suppose that $\sigma < 1$. First, by the $L^q$-$L^2$-boundedness of $T$, we get that 
    \begin{equation*}
        \int\|M(x)\|_Y^2\diff x = \|Ta\|_{L^2(\mathbb{T}^n;Y)}^2 \leq C \|a\|_{L^q(\mathbb{T}^n;Y)}^2 \leq C |B|^{2(1/q-1/p)} \leq C\sigma^{2n(1/q-1/p)},
    \end{equation*}
    proving $(\mathbf{M_1'})$. On the other hand, we have
    \begin{align*}
        \int \|M(x)\|_Y^2|x-z|^\mu \diff x \leq &
        \int_{B(z, 2\sigma^\theta)} \|M(x)\|_Y^2|x-z|^\mu \diff x\\
        & + \sum_{j=1}^{N_{\sigma^\theta}} \int_{A_j(z, \sigma^\theta)} \|M(x)\|_Y^2|x-z|^\mu \diff x \\
        = & I_1 + I_2.
    \end{align*}
    As in the case above, we can use $(\mathbf{M_1'})$ to estimate 
    \begin{equation*}
        I_1 \leq C \sigma^{\theta\mu + 2n(1/q - 1/p)}.
    \end{equation*}
    Moreover, when $x \in A_j(z, \sigma^\theta)$ we have an estimate similar as above, namely that
    \begin{align*}
        I_2 \leq & C \sum_{j=1}^{N_{\sigma^\theta}} \sigma^{2\omega + 2n(1 - 1/p) } (2^j\sigma^\theta)^{\mu - n - 2\omega/\alpha} \\
        \leq & C \sigma^{ 2\omega + 2n(1-1/p) + \theta\mu -\theta(n + 2\omega/\alpha) } \\
        = & C\sigma^{ \theta\mu + 2n(1/q - 1/p) },
    \end{align*}
    since 
    \begin{equation*}
        \theta = \frac{n(1-1/q)+\omega}{ n/2 + \omega/\alpha }.
    \end{equation*}
    This completes the proof of $(\mathbf{M_2'})$. It only remains to prove that $\int M(x)\diff x=0$. Notice that $|B|^{-1+1/p}a$  is a $H^{1,2}(\mathbb{T}^n;X)$-atom and given $e \in Y'$ we have that
    \begin{align*}
        |B|^{-1+1/p}\left\langle e, \int M(x)\diff x \right\rangle =&  \int \left\langle \overline{e}(x) , T\left(  |B|^{-1+1/p}a\right)(x)\right\rangle \diff x \\
        = & \int \left\langle T^*\overline{e}(x) ,   |B|^{-1+1/p}a(x)\right\rangle \diff x = 0.
    \end{align*}
    Where we have used the fact that $T^*$ is continuous on $L^2$ to employ \cref{cor:L-inf-BMO} and conclude that $T^*$ maps $L^\infty(\mathbb{T}^n, Y')$ into $\mathrm{BMO}(\mathbb{T}^n; X')$, which is the dual of $H^1(\mathbb{T}^n;X)$, since $X$ is reflexive. Hence, we proved that $M(x)$ is a $(p, \theta,\mu)$-molecule.
\end{proof}
Notice that the related constants of the resulting molecule only depend on the operator $T$. Moreover, the $H^p$-norm of the molecule only depend on such constants. Hence, we proceed to combine the previous results of this section to obtain the $H^p$-boundedness for operators with operator valued kernels under certain conditions. 
\begin{theorem}
    Let $X, Y$ be reflexive Banach spaces, let $T$ and  $T^*$ be operators satisfying the $D_\alpha$ condition. Also, assume that they can be extended to bounded operators from $L^2(\mathbb{T}^n;X)$ into $L^2(\mathbb{T}^n;Y)$ and from $L^q(\mathbb{T}^n;X)$ into $L^2(\mathbb{T}^n;Y)$ so that 
    \begin{equation}
        \frac{1}{q} = \frac{1}{2} + \frac{\beta}{n} \quad \text{ for some } \quad (1-\alpha)\frac{n}{2}\leq \beta<\frac{n}{2}.
    \end{equation}
    Moreover, suppose that $T^*(\overline{e})=0$ for every ${e}\in Y'$ and $\overline{e}:\mathbb{T}^n\rightarrow Y'$ given by $\overline{e}(x)=e$. Let 
    \begin{equation}
        \frac{1}{p_0}=\frac{1}{2} + \frac{\beta(\omega/\alpha+n/2)}{n(\omega/\alpha - \omega +\beta)},
    \end{equation}
    then for $p_0<p\leq1$, the operator $T$ is bounded from $H^p(\mathbb{T}^n;X)$ into $H^p(\mathbb{T}^n;Y)$.
    \label{theo:Hp-operator}
\end{theorem}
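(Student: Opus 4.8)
The plan is to assemble the two structural lemmas already proved: \cref{lem:Ta-is-molecule}, which sends a $(p,2)$-atom to a $(p,\theta,\mu)$-molecule with constants controlled only by the continuity data of $T$ (all of whose hypotheses coincide with those of the present theorem), and \cref{lem:M-in-Hp}, which bounds the $H^p$ quasi-norm of any such molecule by those same constants. Combining them yields a uniform bound $\|Ta\|_{H^p(\mathbb{T}^n;Y)}\le C$ over all $(p,2)$-atoms $a$, and the theorem then follows from the atomic decomposition of $H^p$ together with $p$-subadditivity of the $H^p$ quasi-norm. So the proof is essentially a bookkeeping argument on top of the two lemmas, where all the analytic content has already been front-loaded.

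Concretely, I would fix $p$ with $p_0<p\le1$ and recall $H^p(\mathbb{T}^n;X)=H^{p,2}(\mathbb{T}^n;X)$, so a given $f$ admits a decomposition $f=\sum_j\lambda_j a_j$ with each $a_j$ an $H^{p,2}(\mathbb{T}^n;X)$-atom and $\sum_j|\lambda_j|^p<\infty$, the series converging in $H^p$ and hence in $\mathcal{D}'(\mathbb{T}^n;X)$. Each $a_j\in L^2(\mathbb{T}^n;X)$, so $Ta_j$ is defined by the $L^2$-boundedness of $T$; by \cref{lem:Ta-is-molecule} it is a $(p,\theta,\mu)$-molecule attached to the same ball as $a_j$, with molecule constants independent of $j$, and then \cref{lem:M-in-Hp} gives $\|Ta_j\|_{H^p(\mathbb{T}^n;Y)}\le C$ with $C$ independent of $j$ and of $f$.

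A small point that I would make explicit is why the admissible range of exponents is exactly $p_0<p\le1$: it is precisely the set of $p\le1$ for which \cref{def:molecule} admits an admissible $\mu$. For $\sigma<1$ this requires the interval $(2n/p-n,\,2\beta/(1-\theta))$ to be nonempty, and \cref{lem:M-in-Hp} additionally needs $\mu>np/2-n$, which for $p\le1$ is weaker since then $2n/p-n\ge np/2-n$; for $\sigma\ge1$ the remaining condition $2n/p-n<n+2\omega/\alpha$ is then automatic. Using $1-\theta=(\omega/\alpha-\omega+\beta)/(n/2+\omega/\alpha)$, the inequality $2n/p-n<2\beta/(1-\theta)$ rearranges to exactly $1/p<1/p_0$, so for $p_0<p\le1$ we fix such a $\mu$ and the previous paragraph applies.

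Finally I would close the argument by summation. Extend $T$ by duality through $T^{*}$ to a (weak-$*$) continuous operator on $\mathcal{D}'(\mathbb{T}^n;X)$, so that $Tf=\sum_j\lambda_j Ta_j$ already holds in $\mathcal{D}'(\mathbb{T}^n;Y)$. Since the $H^p$ quasi-norm is $p$-subadditive for $0<p\le1$ and $H^p(\mathbb{T}^n;Y)$ is complete, the partial sums of $\sum_j\lambda_j Ta_j$ are Cauchy in $H^p(\mathbb{T}^n;Y)$, so the series converges there; its $H^p$-limit maps to the same distribution, hence equals $Tf$, and therefore
\[
\|Tf\|_{H^p(\mathbb{T}^n;Y)}^p\;\le\;\sum_j|\lambda_j|^p\,\|Ta_j\|_{H^p(\mathbb{T}^n;Y)}^p\;\le\;C\sum_j|\lambda_j|^p.
\]
Taking the infimum over all atomic decompositions of $f$ gives $\|Tf\|_{H^p(\mathbb{T}^n;Y)}\le C\|f\|_{H^p(\mathbb{T}^n;X)}$. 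I expect the only genuinely delicate point to be this identification of the $H^p$-limit of the image series with $Tf$ — i.e., that the $\mathcal{D}'$-extension of $T$ commutes with the atomic series — which is handled by matching $H^p$-convergence against $\mathcal{D}'$-convergence; everything else is routine once the exponent constraints are kept straight.
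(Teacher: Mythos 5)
Your proposal is correct and follows essentially the same route as the paper's own proof: combine \cref{lem:Ta-is-molecule} and \cref{lem:M-in-Hp} to get a uniform bound $\|Ta\|_{H^p(\mathbb{T}^n;Y)}\leq C$ over $(p,2)$-atoms, and observe that the constraint $2n/p-n<2\beta/(1-\theta)$ from \cref{def:molecule} is exactly $p>p_0$. Your write-up is in fact more complete than the paper's, which stops at the uniform atomic bound and omits both the summation/$p$-subadditivity step and the identification of $Tf$ with the $H^p$-limit of $\sum_j\lambda_j Ta_j$.
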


\begin{proof}
    Notice that by \cref{lem:Ta-is-molecule}, the image of every $H^{p,2}(\mathbb{T}^n;X)$-atom is a $(p, \theta,\mu)$-molecule with constants $C$ depending only on $T$, and by \cref{lem:M-in-Hp} we have that such molecules have $H^p$-norms depending only in said constants. Hence we have a uniform bound $\|Ta\|_{H^p(\mathbb{T}^n;Y)}\leq C$ for every $(p,2)$-atom $a$. Inspecting \cref{def:molecule} we can see that this happens when 
    \begin{equation*}
        2n/p -n < \frac{2\beta}{1-\theta}, \quad \text{ where } \quad \theta = \frac{n/2+\omega-\beta}{n/2+\omega/\alpha}.
    \end{equation*}
    Thus, we can conclude the expression for $p_0$ stated above.
\end{proof}
Now, we apply this result in the specific context of toroidal pseudo-differential operators.
\begin{theorem}
    Let $T\in \Psi^m_{\rho,\delta}(\mathbb{T}^n\times\mathbb{Z}^n)$, $0<\rho\leq1$, $0\leq\delta<1$. Assume that
    \begin{equation}
        m\leq-\beta-n\lambda \quad  \text{for some} \quad (1-\rho)\frac{n}{2}\leq\beta\leq \frac{n}{2}, 
    \end{equation}
     and that $T^*(1)=0$ in the sense of BMO. Then the operator $T$ is a continuous mapping from $H^p(\mathbb{T}^n)$ into itself for $p_0<p\leq1$ where 
     \begin{equation}
         \frac{1}{p_0} = \frac{1}{2}+\frac{\beta(1/\rho + n/2)}{n(1/\rho-1+\beta)}.
     \end{equation}
\end{theorem}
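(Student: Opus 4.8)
The plan is to obtain the statement by specializing the abstract $H^{p}$-boundedness result Theorem~\ref{theo:Hp-operator} to the scalar case $X=Y=\mathbb{C}$, with the choices $\alpha=\rho$ and $\omega=1$. With these, $\omega/\alpha=1/\rho$, so the critical exponent produced by Theorem~\ref{theo:Hp-operator} is exactly $1/p_{0}=1/2+\beta(1/\rho+n/2)/\bigl(n(1/\rho-1+\beta)\bigr)$, the index in the statement, which collapses to $p_{0}=n/(n+1)$ when $\rho=1$. Thus the whole task reduces to verifying, for $T\in\Psi^{m}_{\rho,\delta}(\mathbb{T}^n\times\mathbb{Z}^n)$ with $m\le-\beta-n\lambda$ and $\beta$ as in the hypothesis, the hypotheses of Theorem~\ref{theo:Hp-operator}: (i) $T$ satisfies the $D_{\rho}$ condition of Remark~\ref{rem:D-alpha-condition} with $\omega=1$ (which, since that condition already bounds $\|k(x,y)-k(x,z)\|+\|k(y,x)-k(z,x)\|$, automatically gives the same for $T^{*}$, whose kernel is the transpose of the kernel of $T$); (ii) $L^{2}$-boundedness of $T$ and $T^{*}$; (iii) $L^{q}$--$L^{2}$-boundedness of $T$ and $T^{*}$ with $1/q=1/2+\beta/n$; and (iv) $T^{*}(1)=0$ in the sense of BMO, which for $Y=\mathbb{C}$ is exactly the condition $T^{*}(\overline{e})=0$ and is part of the hypothesis.

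Items (ii) and (iii) are handled as in the proof of Theorem~\ref{theo:Hp-Lp}. Since $m\le-\beta-n\lambda\le-n\lambda$, $T$ (and hence $T^{*}$) is $L^{2}$-bounded by \cite[Theorem~3.6]{Cardona:Martinez}. For (iii) I would write $T=(TJ^{\beta})\circ J^{-\beta}$, where $TJ^{\beta}\in\Psi^{m+\beta}_{\rho,\delta}(\mathbb{T}^n\times\mathbb{Z}^n)$ has order $m+\beta\le-n\lambda$ and so is $L^{2}$-bounded, while $J^{-\beta}\colon L^{q}(\mathbb{T}^n)\to L^{2}(\mathbb{T}^n)$ by the Hardy--Littlewood--Sobolev inequality; composing gives $T\colon L^{q}\to L^{2}$. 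For $T^{*}\colon L^{q}\to L^{2}$ one argues dually, since this is equivalent to $T\colon L^{2}\to L^{q'}$, which follows from $T=J^{-\beta}\circ(J^{\beta}T)$ using the $L^{2}$-boundedness of the order-$(\le-n\lambda)$ operator $J^{\beta}T$ and the Sobolev embedding $H^{\beta}(\mathbb{T}^n)\hookrightarrow L^{q'}(\mathbb{T}^n)$. Item (iv) is assumed; through Corollary~\ref{cor:L-inf-BMO} it is what makes the cancellation property $\int Ta=0$ in Lemma~\ref{lem:Ta-is-molecule} both meaningful and true.

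The substantive point is (i). Using the extension $\tilde{p}\in S^{m}_{\rho,\delta}(\mathbb{T}^n\times\mathbb{R}^n)$ of the symbol of $T$ (Theorem~\ref{theo:equivalence-symbols}) and the Littlewood--Paley-type decomposition $k=\int_{1}^{\infty}k(\cdot,\cdot,t)\,dt/t$ used in the proof of Theorem~\ref{theo:pdo-kernel}, each frequency-localized piece obeys, after integration by parts in $\xi$ and a mean-value estimate in the remaining space variable, a bound of the shape $t^{n+m}\min\{1,t|y-z|\}(1+t^{\rho}|x-z|)^{-N}$; integrating in $t$ then yields the pointwise estimate $\|k(x,y)-k(x,z)\|+\|k(y,x)-k(z,x)\|\le C|y-z|\,|x-z|^{-n-1/\rho}$ in the range $2|y-z|^{\rho}\le|x-z|$, i.e.\ the $D_{\rho}$ condition with $\omega=1$. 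This is the same kernel analysis that produces the integrated bounds of Theorem~\ref{theo:pdo-kernel}(b)--(c), whose order threshold $m\le-n[(1-\rho)/2+\lambda]$ is precisely what $(1-\rho)n/2\le\beta$ together with $m\le-\beta-n\lambda$ supplies, and the transposed kernel satisfies the same bounds. With (i)--(iv) in hand, Theorem~\ref{theo:Hp-operator}---whose proof runs through Lemmas~\ref{lem:Ta-is-molecule} and \ref{lem:M-in-Hp}, i.e.\ the image of every $H^{p,2}$-atom is a $(p,\theta,\mu)$-molecule with uniformly bounded $H^{p}$-norm---gives that $T$ maps $H^{p}(\mathbb{T}^n)$ continuously into itself for $p_{0}<p\le1$.

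I expect the main obstacle to be making the kernel analysis in (i) work uniformly in the regime $\rho\le\delta$: one must track how the loss $n\lambda$ enters the constants so that the threshold for the pointwise $D_{\rho}$ estimate genuinely coincides with $m\le-\beta-n\lambda$, and one must confirm that $T^{*}$ inherits both the kernel bounds and the $L^{q}$--$L^{2}$ estimate in this range, where the classical symbolic calculus for adjoints is not directly available. The remaining steps are routine once Theorems~\ref{theo:pdo-kernel}, \ref{theo:Hp-Lp} and \ref{theo:Hp-operator} are granted.
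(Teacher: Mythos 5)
Your proposal is correct and follows essentially the same route as the paper: reduce to Theorem~\ref{theo:Hp-operator} with $X=Y=\mathbb{C}$, $\alpha=\rho$, $\omega=1$, verifying the $D_\rho$ condition through the kernel analysis of Theorem~\ref{theo:pdo-kernel}, the $L^2$-boundedness via \cite[Theorem~3.6]{Cardona:Martinez}, and the $L^q$--$L^2$ bound via a Bessel-potential factorization. If anything you are more careful than the paper at two points: you derive the pointwise $D_\rho$ estimate directly from the symbol instead of inferring it from the integrated $D_{1,\rho}$ bounds (which, as literally stated in the paper, is the wrong direction of implication), and your factorization $T=(TJ^{\beta})\circ J^{-\beta}$ yields the required $L^q\to L^2$ estimate, whereas the paper's displayed inequality \eqref{eq:q-2-boundedness} as written bounds $\|Tf\|_q$ by $\|f\|_2$ rather than $\|Tf\|_2$ by $\|f\|_q$.
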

\begin{proof}
    We can choose $\gamma =\rho$ in \cref{theo:pdo-kernel} in order to obtain the $D_{1,\rho}$ condition for $T$ and $T^*$. Hence these operators satisfy the $D_\rho$ condition  with $\omega=1$ (see \cref{rem:D-alpha-condition}). Moreover, $T$ is bounded on $L^2$ as proved in \cite[Theorem~3.6]{Cardona:Martinez} and the $L^q$-$L^2$-boundedness of $T$ follows from \cref{eq:q-2-boundedness} in the proof \cref{theo:Hp-Lp}. Thus we have proved that $T$ satisfies all the conditions to apply \cref{theo:Hp-operator}. The proof of the theorem is complete.
\end{proof}

\bibliographystyle{amsplain}

\begin{thebibliography}{99}

\bibitem{agranovich}
Agranovich, M. S. (1980). Spectral properties of elliptic pseudodifferential operators on a closed curve. \textit{Functional Analysis and Its Applications}, \textit{13}(4), 279–281. doi:10.1007/bf01078368 


\bibitem{alvarez-hounie}
Alvarez, J., \& Hounie, J. (1990). Estimates for the kernel and continuity properties of pseudo-differential operators. \textit{Arkiv för Matematik}, \textit{28}(1–2), 1–22. doi:10.1007/bf02387364 


\bibitem{alvarez-milman}
Álvarez, J., \& Milman, M. (1986). Vector valued inequalities for strongly singular Calderón-Zygmund operators. \textit{Revista Matemática Iberoamericana}, \textit{2}(4), 405–426. doi:10.4171/rmi/42 


\bibitem{calderon-vaillancourt-1}
Calder\'on, A. P., \& Vaillancourt, R. (1971). On the boundedness of pseudo-differential operators. \textit{Journal of the Mathematical Society of Japan}, \textit{23}(2). doi:10.2969/jmsj/02320374 


\bibitem{calderon-vaillancourt-2}
Calder\'on, A. P., \& Vaillancourt, R. (1972). A class of bounded pseudo-differential operators. \textit{Proceedings of the National Academy of Sciences}, \textit{69}(5), 1185–1187. doi:10.1073/pnas.69.5.1185 


\bibitem{calderon-zygmund}
Calder\'on, A. P., \& Zygmund, A. (1952). On the existence of certain singular integrals. \textit{Acta Mathematica}, \textit{88}(0), 85–139. doi:10.1007/bf02392130 


\bibitem{cardona}
Cardona, D. (2014). Weak type (1,1) bounds for a class of periodic pseudo-differential operators. \textit{Journal of Pseudo-Differential Operators and Applications}, \textit{5}(4), 507–515. doi:10.1007/s11868-014-0101-9 


\bibitem{Cardona2016} 
Cardona, D. (2016). Hölder–Besov boundedness for periodic pseudo-differential operators. \textit{Journal of Pseudo-Differential Operators and Applications}, \textit{8}(1), 13–34. doi:10.1007/s11868-016-0174-8 


\bibitem{Cardona2018} 
Cardona, D. (2018). On the boundedness of periodic pseudo-differential operators. \textit{Monatshefte für Mathematik}, \textit{185}(2), 189–206. doi:10.1007/s00605-017-1029-y 


\bibitem{cardona-delgado-kumar}
Cardona, D., Delgado, J., Kumar, V., \& Ruzhansky, M. Lp-Lq-Boundedness of pseudo-differential operators on compact Lie groups, to appear in \textit{Osaka Journal of  Mathematics}, arXiv:2310.16247.

\bibitem{cardona:kumar:jfaa} 
Cardona, D., \& Kumar, V. (2019). $L^p$-boundedness and $L^p$-nuclearity of multilinear pseudo-differential operators on ${\mathbb {Z}}^n$ and the torus ${\mathbb {T}}^n$. \textit{Journal of Fourier Analysis and Applications}, 25(6), 2973–3017. doi:10.1007/s00041-019-09689-7 

\bibitem{Cardona:Martinez} 
Cardona, D. \& Martinez, M. A. Boundedness of pseudo-differential operators on the torus revisited, I. preprint. arXiv:2502.20575.

\bibitem{CardonaCRAS} 
Cardona, D., Messiouene, R., \& Senoussaoui, A. (2021b). Periodic Fourier integral operators in Lp-spaces. \textit{Comptes Rendus. Mathématique}, \textit{359}(5), 547–553. doi:10.5802/crmath.194 


\bibitem{cardona-ruzhansky}
Cardona, D., \& Ruzhansky, M. (2022). Oscillating Singular integral operators on compact lie groups revisited. \textit{Mathematische Zeitschrift}, \textit{303}(2). doi:10.1007/s00209-022-03175-5 


\bibitem{cardona-ruzhansky-subelliptic}
Cardona, D., \& Ruzhansky, M. Subelliptic pseudo-differential operators and Fourier integral operators on compact Lie groups\textit{.} To appear in \textit{MSJ Memoirs, Mathematical Society of Japan}, Tokyo, 180 Pages. arXiv:2008.09651.

\bibitem{cobos-ruzhansky}
G\'omez Cobos, S. \& Ruzhansky, M. (2024)
$L^p$-bounds in Safarov pseudo-differential calculus on manifolds with bounded geometry. To appear in Journal of London Mathematical Society, arXiv:2403.13920.

\bibitem{david-journe}
David, G., \& Journ\'e, J. L. (1984). A boundedness criterion for generalized Calder\'on-Zygmund operators. \textit{The Annals of Mathematics}, \textit{120}(2), 371. doi:10.2307/2006946 


\bibitem{delgado}
Delgado, J. (2013). L P -bounds for pseudo-differential operators on the torus. \textit{Pseudo-Differential Operators, Generalized Functions and Asymptotics}, 103–116. doi:10.1007/978-3-0348-0585-8\_6 


\bibitem{delgado-ruzhansky}
Delgado, J., \& Ruzhansky, M. (2017). Lp-bounds for pseudo-differential operators on compact lie groups. \textit{Journal of the Institute of Mathematics of Jussieu}, \textit{18}(3), 531–559. doi:10.1017/s1474748017000123 


\bibitem{duoandikoetxea}
Duoandikoetxea, J. (2001) Fourier Analysis. \textit{American Mathematical Society, Providence}.



\bibitem{fefferman-BMO}
Fefferman, C. (1971). Characterizations of bounded mean oscillation. \textit{Bulletin of the American Mathematical Society}, \textit{77}(4), 587–588. doi:10.1090/s0002-9904-1971-12763-5 


\bibitem{fefferman-Lp}
Fefferman, C. (1973). Lp bounds for pseudo-differential operators. \textit{Israel Journal of Mathematics}, \textit{14}(4), 413–417. doi:10.1007/bf02764718 


\bibitem{fefferman-stein}
Fefferman, C., \& Stein, E. M. (1972). Hp spaces of several variables. \textit{Acta Mathematica}, \textit{129}(0), 137–193. doi:10.1007/bf02392215 


\bibitem{folland-stein}
Folland, G. B., \& Stein, E. M. (1982). Hardy spaces on homogeneous groups. \textit{Princeton University Press, Princeton, NJ} . doi:10.1515/9780691222455 

\bibitem{hormander}
Hörmander, L. (1985). The Analysis of the Linear Partial Differential Operators, vol. III.  \textit{Springer, Berlin}


\bibitem{hounie}
Hounie, J. (1986). \textit{On the $L^2$ continuity of pseudo-differential operators.} \textit{Communications in Partial Differential Equations, 11}(7), 765–778. doi:10.1080/03605308608820444 

\bibitem{kohn-nirenberg}
Kohn, J., Nirenberg, L. (1965). \textit{An algebra of pseudo-differential operators}. Communications on Pure and Applied Mathematics,
XVIII, 269–305.

\bibitem{mclane}
McLean, W.M. (1991). Local and global description of periodic pseudo-differential operators. \textit{Mathematische Nachrichten}
150, 151–161.

\bibitem{molahajloo-wong}
Molahajloo, S., \& Wong, M.W. (2008) Pseudo-differential operators on $\mathbb{S}^1$. \textit{New Developments in Pseudo-differential Operators}, editors: Rodino, L., Wong, M.W., 297–306. 

\bibitem{nagase}
Nagase, M. (1986) On some classes of L p-bounded pseudodifferential operators\textit{.} \textit{Osaka Journal of Mathematics 23}(2),
425440.

\bibitem{defrancia}
Rubio de Francia, J. L., Ruiz, F. J., \& Torrea, J. L. (1986). \textit{Calderón-Zygmund theory for operator-valued kernels.} \textit{Advances in Mathematics 62}(1), 7–48. doi:10.1016/0001-8708(86)90086-1 

\bibitem{ruzhansky-turunen2}
Ruzhansky, M., \& Turunen, M. (2007). On the Fourier analysis of operators on the torus.\textit{ }\textit{Modern Trends in Pseudo-Differential Operators}, editors: J. Toft, M.W. Wong,
and H. Zhu, Operator Theory: Advances and Applications 172, Birkh¨auser,
87–105.

\bibitem{ruzhansky-turunen}
Ruzhansky, M., \& Turunen, V. (2010). Pseudo-differential operators and symmetries: Background analysis and advanced topics. \textit{Basel: Birkh\"auser. }

\bibitem{ruzhansky-turunen-quant}
Ruzhansky, M., \& Turunen, V. (2010) Quantization of Pseudo-Differential Operators on the torus. \textit{Journal of Fourier
Analysis and Applications 16}, 943–982. Birkhäuser Verlag, Basel.

\bibitem{turunen-vainikko}
Turunen, V., \& Vainikko, G. (1998) On symbol analysis of periodic pseudodifferential operators. \textit{Zeitschrift für Analysis und ihre Anwendungen} 17, 9–22.

\bibitem{wang}
Wang, L. (1997). Pseudo-differential operators with rough coefficients. \textit{Ph. D. Thesis, McMaster University}, 1997

\bibitem{wong}
Wong M. W. (2011). Discrete Fourier Analysis. \textit{Birkhäuser}.


\end{thebibliography}

\end{document}